\documentclass[a4paper,12pt,reqno]{amsart}
\usepackage{amsfonts}
\usepackage{amsmath}
\usepackage{amssymb}
\usepackage[a4paper]{geometry}
\usepackage{mathrsfs}
\usepackage[colorlinks]{hyperref}
\renewcommand\eqref[1]{(\ref{#1})}
\makeatletter
\newcommand*{\mint}[1]{%
  \mint@l{#1}{}%
}
\newcommand*{\mint@l}[2]{%
  \@ifnextchar\limits{%
    \mint@l{#1}%
  }{%
    \@ifnextchar\nolimits{%
      \mint@l{#1}%
    }{%
      \@ifnextchar\displaylimits{%
        \mint@l{#1}%
      }{%
        \mint@s{#2}{#1}%
      }%
    }%
  }%
}
\newcommand*{\mint@s}[2]{%
  \@ifnextchar_{%
    \mint@sub{#1}{#2}%
  }{%
    \@ifnextchar^{%
      \mint@sup{#1}{#2}%
    }{%
      \mint@{#1}{#2}{}{}%
    }%
  }%
}
\def\mint@sub#1#2_#3{%
  \@ifnextchar^{%
    \mint@sub@sup{#1}{#2}{#3}%
  }{%
    \mint@{#1}{#2}{#3}{}%
  }%
}
\def\mint@sup#1#2^#3{%
  \@ifnextchar_{%
    \mint@sup@sub{#1}{#2}{#3}%
  }{%
    \mint@{#1}{#2}{}{#3}%
  }%
}
\def\mint@sub@sup#1#2#3^#4{%
  \mint@{#1}{#2}{#3}{#4}%
}
\def\mint@sup@sub#1#2#3_#4{%
  \mint@{#1}{#2}{#4}{#3}%
}
\newcommand*{\mint@}[4]{%
  \mathop{}%
  \mkern-\thinmuskip
  \mathchoice{%
    \mint@@{#1}{#2}{#3}{#4}%
        \displaystyle\textstyle\scriptstyle
  }{%
    \mint@@{#1}{#2}{#3}{#4}%
        \textstyle\scriptstyle\scriptstyle
  }{%
    \mint@@{#1}{#2}{#3}{#4}%
        \scriptstyle\scriptscriptstyle\scriptscriptstyle
  }{%
    \mint@@{#1}{#2}{#3}{#4}%
        \scriptscriptstyle\scriptscriptstyle\scriptscriptstyle
  }%
  \mkern-\thinmuskip
  \int#1%
  \ifx\\#3\\\else_{#3}\fi
  \ifx\\#4\\\else^{#4}\fi
}
\newcommand*{\mint@@}[7]{%
  \begingroup
    \sbox0{$#5\int\m@th$}%
    \sbox2{$#5\int_{}\m@th$}%
    \dimen2=\wd0 %
    \let\mint@limits=#1\relax
    \ifx\mint@limits\relax
      \sbox4{$#5\int_{\kern1sp}^{\kern1sp}\m@th$}%
      \ifdim\wd4>\wd2 %
        \let\mint@limits=\nolimits
      \else
        \let\mint@limits=\limits
      \fi
    \fi
    \ifx\mint@limits\displaylimits
      \ifx#5\displaystyle
        \let\mint@limits=\limits
      \fi
    \fi
    \ifx\mint@limits\limits
      \sbox0{$#7#3\m@th$}%
      \sbox2{$#7#4\m@th$}%
      \ifdim\wd0>\dimen2 %
        \dimen2=\wd0 %
      \fi
      \ifdim\wd2>\dimen2 %
        \dimen2=\wd2 %
      \fi
    \fi
    \rlap{%
      $#5%
        \vcenter{%
          \hbox to\dimen2{%
            \hss
            $#6{#2}\m@th$%
            \hss
          }%
        }%
      $%
    }%
  \endgroup
}
%
%
\setlength{\textwidth}{15.2cm}
\setlength{\textheight}{22.7cm}
\setlength{\topmargin}{0mm}
\setlength{\oddsidemargin}{3mm}
\setlength{\evensidemargin}{3mm}
\setlength{\footskip}{1cm}


\numberwithin{equation}{section}
\theoremstyle{plain}
\newtheorem{thm}{Theorem}[section]
\newtheorem{prop}[thm]{Proposition}

\theoremstyle{definition}
\newtheorem{defn}[thm]{Definition}
\newtheorem{rem}[thm]{Remark}



\begin{document}

   \title[HLS and Stein-Weiss  inequalities on homogeneous Lie groups]
{Hardy-Littlewood-Sobolev and Stein-Weiss inequalities on homogeneous Lie groups}

\author[A. Kassymov]{Aidyn Kassymov}
\address{
  Aidyn Kassymov:
  \endgraf
  \endgraf
  Institute of Mathematics and Mathematical Modeling
  \endgraf
  125 Pushkin str.
  \endgraf
  050010 Almaty
  \endgraf
  Kazakhstan
  \endgraf
  and
  \endgraf
  Al-Farabi Kazakh National University
  \endgraf
   71 Al-Farabi avenue
   \endgraf
   050040 Almaty
   \endgraf
   Kazakhstan
  {\it E-mail address} {\rm kassymov@math.kz}}

\author[M. Ruzhansky]{Michael Ruzhansky}
\address{
  Michael Ruzhansky:
  \endgraf
  Department of Mathematics
  \endgraf
  Ghent University, Belgium
  \endgraf
  and
  \endgraf
  School of Mathematical Sciences
    \endgraf
    Queen Mary University of London
  \endgraf
  United Kingdom
  \endgraf
  {\it E-mail address} {\rm ruzhansky@gmail.com}
  }

\author[D. Suragan]{Durvudkhan Suragan}
\address{
	Durvudkhan Suragan:
	\endgraf
	Department of Mathematics
	\endgraf
	School of Science and Technology, Nazarbayev University
	\endgraf
	53 Kabanbay Batyr Ave, Astana 010000
	\endgraf
	Kazakhstan
	\endgraf
	{\it E-mail address} {\rm durvudkhan.suragan@nu.edu.kz}}
\thanks{
The authors were supported in parts by the FWO Odysseus Project, by the EPSRC grant EP/R003025/1 and by the Leverhulme
Grant RPG-2017-151, as well as by the MESRK grant AP05130981.
\
}

     \keywords{Riesz potential, Hardy-Littlewood-Sobolev inequality, Stein-Weiss inequality, fractional operator, fractional integral,
     	 homogeneous Lie group.}
 \subjclass{22E30, 43A80.}

     \begin{abstract}
In this note we prove the Stein-Weiss inequality on general homogeneous Lie groups. The obtained results extend previously known inequalities. Special properties of homogeneous norms play a key role in our proofs. Also, we give a simple proof of the Hardy-Littlewood-Sobolev inequality on general homogeneous Lie groups.
     \end{abstract}
     \maketitle

\section{Introduction}

Historically, in \cite{HL28}, Hardy and Littlewood considered the one dimensional fractional integral operator on $(0,\infty)$ given by
\begin{equation}\label{1Doper}
T_{\lambda}u(x)=\int_{0}^{\infty}\frac{u(y)}{|x-y|^{\lambda}}dy,\,\,\,\,0<\lambda<1,
\end{equation}
and proved the following theorem:
\begin{thm}\label{1DHLS28}
Let $1<p<q<\infty$ and $u\in L^{p}(0,\infty)$  with $\frac{1}{q}=\frac{1}{p}+\lambda-1$, then
\begin{equation}
\|T_{\lambda}u\|_{L^{q}(0,\infty)}\leq C \|u\|_{L^{p}(0,\infty)},
\end{equation}
where $C$ is a positive constant independent of $u$.
\end{thm}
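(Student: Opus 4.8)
The plan is to reduce this one-dimensional fractional integral estimate to the $L^p$-boundedness of the Hardy--Littlewood maximal operator $M$ by means of a pointwise ``Hedberg-type'' inequality. First I would extend $u$ by zero from $(0,\infty)$ to all of $\mathbb{R}$, so that $T_{\lambda}u(x)=(u*k)(x)$ with convolution kernel $k(z)=|z|^{-\lambda}$, and it clearly suffices to estimate $\|u*k\|_{L^{q}(\mathbb{R})}$ by $\|u\|_{L^{p}(\mathbb{R})}$.

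Next, for each $R>0$ I would split the convolution integral into the near region $\{|x-y|\le R\}$ and the far region $\{|x-y|>R\}$. On the near part, decomposing $\{|x-y|\le R\}$ into dyadic annuli $\{2^{-j-1}R<|x-y|\le 2^{-j}R\}$ and bounding the average of $|u|$ over each by $Mu(x)$, then summing the resulting geometric series (which converges because $\lambda<1$), gives a bound of the form $CR^{1-\lambda}Mu(x)$. On the far part, Hölder's inequality with exponents $p$ and $p'$ yields $\|u\|_{p}\big(\int_{|z|>R}|z|^{-\lambda p'}\,dz\big)^{1/p'}$; here the tail integral is finite precisely because the hypothesis $\frac{1}{q}=\frac{1}{p}+\lambda-1>0$ forces $\lambda p'>1$, and it produces a bound $CR^{1/p'-\lambda}\|u\|_{p}$ with a negative power of $R$.

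Then I would optimize in $R$. Balancing the two contributions, i.e. choosing $R=(\|u\|_{p}/Mu(x))^{p}$, leads to the pointwise estimate $|T_{\lambda}u(x)|\le C\,\|u\|_{p}^{1-p/q}\,(Mu(x))^{p/q}$, where the exponents work out because the Sobolev scaling relation $\frac{1}{q}=\frac{1}{p}+\lambda-1$ is exactly $p/q=1-p(1-\lambda)$. Raising this to the power $q$, integrating in $x$, and applying the classical maximal theorem $\|Mu\|_{p}\le C_{p}\|u\|_{p}$ (valid since $p>1$) gives $\|T_{\lambda}u\|_{q}^{q}\le C\|u\|_{p}^{q-p}\|Mu\|_{p}^{p}\le C\|u\|_{p}^{q}$, which is the asserted inequality; note that $q>p$ is precisely what makes the exponent $1-p/q$ positive so that the pointwise bound is meaningful.

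The main obstacle is genuinely just the $L^{p}$-boundedness of the Hardy--Littlewood maximal function, which I would invoke as a standard fact rather than reprove; the rest is bookkeeping with the scaling exponents. The two points that must be checked with care are that the hypotheses do deliver $\lambda p'>1$ (needed for convergence of the far-region integral) and $q>p$, and that the constant $C$ produced this way is indeed independent of $u$, which is clear since it depends only on $p$, $q$ and $\lambda$.
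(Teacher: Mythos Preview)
Your argument is correct: it is the classical Hedberg pointwise-inequality route, and the exponent checks ($\lambda p'>1$, $1-p(1-\lambda)=p/q$, $q>p$) are all sound. One cosmetic point: the optimizing choice of $R$ tacitly assumes $Mu(x)>0$, but if $Mu(x)=0$ then $u\equiv 0$ a.e.\ and the inequality is trivial, so nothing is lost.

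The paper does not give a separate proof of this one-dimensional statement; it is quoted as the historical Hardy--Littlewood result. The closest thing to ``the paper's own proof'' is the argument for Theorem~\ref{Trsob}, the general homogeneous-group version, which specializes to the present case with $Q=1$. That proof also splits the kernel at a radius, but proceeds differently afterwards: it bounds the near part $K_{1}*u$ in $L^{p}$ by Young's inequality with $\|K_{1}\|_{L^{1}}$, bounds the far part $K_{2}*u$ in $L^{\infty}$ by Young with $\|K_{2}\|_{L^{p'}}$, chooses the cut-off radius $\mu$ so that the $L^{\infty}$ bound equals the level $\zeta$, and thereby obtains a weak-type $(p,q)$ estimate; the strong-type bound then follows from Marcinkiewicz interpolation. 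Your approach instead combines both pieces into a single pointwise inequality $|T_{\lambda}u(x)|\le C\|u\|_{p}^{1-p/q}(Mu(x))^{p/q}$ and invokes the $L^{p}$-boundedness of the maximal operator directly. The trade-off: the paper's route avoids the maximal function altogether but requires an interpolation step, while yours is more direct (no interpolation, a genuine pointwise bound) at the cost of importing the maximal theorem as a black box. Both are standard, and both rely on the same scaling balance; they are essentially dual packagings of the same idea.
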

The $N$-dimensional analogue of \eqref{1Doper} can be written by the formula:
\begin{equation}\label{NDoper}
I_{\lambda}u(x)=\int_{\mathbb{R}^{N}}\frac{u(y)}{|x-y|^{\lambda}}dy,\,\,\,\,0<\lambda<N.
\end{equation}
The $N$-dimensional case of  Theorem \ref{1DHLS28} was extended by Sobolev in
\cite{Sob38}:
\begin{thm}\label{THM:HLS}
Let $1<p<q<\infty$, $u\in L^{p}(\mathbb{R}^{N})$  with $\frac{1}{q}=\frac{1}{p}+\frac{\lambda}{N}-1$, then
\begin{equation}
\|I_{\lambda}u\|_{L^{q}(\mathbb{R}^{N})}\leq C \|u\|_{L^{p}(\mathbb{R}^{N})},
\end{equation}
where $C$ is a positive constant independent of $u$.
\end{thm}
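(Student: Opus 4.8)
The plan is to give the short proof via Hedberg's trick: split the Riesz potential into a ``local'' piece controlled by the Hardy--Littlewood maximal function and a ``tail'' piece controlled by H\"older's inequality, and then optimize over the splitting radius. We may assume $u\ge 0$ (replace $u$ by $|u|$ and note $|I_\lambda u|\le I_\lambda |u|$), and by a density/truncation argument together with Fatou's lemma it suffices to treat $u$ bounded with compact support, so that all integrals below converge absolutely. Fix $x\in\mathbb{R}^N$ and $R>0$, and write
\[
I_\lambda u(x)=\int_{|x-y|<R}\frac{u(y)}{|x-y|^{\lambda}}\,dy+\int_{|x-y|\ge R}\frac{u(y)}{|x-y|^{\lambda}}\,dy=:A_R(x)+B_R(x).
\]

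For $A_R(x)$ I would decompose the ball $\{|x-y|<R\}$ into the dyadic annuli $\{2^{-j-1}R\le|x-y|<2^{-j}R\}$, $j\ge 0$, estimate $|x-y|^{-\lambda}\le (2^{-j-1}R)^{-\lambda}$ on the $j$-th annulus, and bound the integral of $u$ over $\{|x-y|<2^{-j}R\}$ by $|B(x,2^{-j}R)|\,Mu(x)$, where $Mu(x)=\sup_{r>0}|B(x,r)|^{-1}\int_{B(x,r)}|u|$ is the Hardy--Littlewood maximal function. This gives
\[
A_R(x)\le\sum_{j=0}^{\infty}(2^{-j-1}R)^{-\lambda}(2^{-j}R)^{N}\,C\,Mu(x)=C\,R^{N-\lambda}\,Mu(x),
\]
the geometric series converging precisely because $N-\lambda>0$. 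For $B_R(x)$, H\"older's inequality with exponents $p$ and $p'$ yields $B_R(x)\le\|u\|_{L^p(\mathbb{R}^N)}\left(\int_{|x-y|\ge R}|x-y|^{-\lambda p'}\,dy\right)^{1/p'}$, and the hypothesis $\tfrac1q=\tfrac1p+\tfrac{\lambda}{N}-1>0$ is exactly the condition $\lambda p'>N$ that makes this integral finite, equal to a constant times $R^{N/p'-\lambda}$; hence $B_R(x)\le C\,\|u\|_{L^p(\mathbb{R}^N)}\,R^{N/p'-\lambda}$.

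Combining the two bounds and choosing $R$ so that they balance, i.e.\ $R\sim(\|u\|_{L^p(\mathbb{R}^N)}/Mu(x))^{p/N}$, produces the pointwise inequality
\[
|I_\lambda u(x)|\le C\,\|u\|_{L^p(\mathbb{R}^N)}^{1-p/q}\,(Mu(x))^{p/q},
\]
where the exponents come out correctly because the scaling relation gives $\tfrac{p(N-\lambda)}{N}=1-\tfrac{p}{q}$. Taking the $L^q(\mathbb{R}^N)$ norm and using $\|(Mu)^{p/q}\|_{L^q(\mathbb{R}^N)}=\|Mu\|_{L^p(\mathbb{R}^N)}^{p/q}$ reduces everything to the bound $\|Mu\|_{L^p(\mathbb{R}^N)}\le C\|u\|_{L^p(\mathbb{R}^N)}$, which holds since $p>1$. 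The main (and only non-elementary) obstacle is exactly this $L^p$-boundedness of the Hardy--Littlewood maximal operator; the dyadic splitting, the H\"older estimate on the tail, and the optimization in $R$ are all elementary. This is also the reason the argument is ``simple'' and why it will transfer to the homogeneous Lie group setting, where a suitable maximal inequality plays the same role.
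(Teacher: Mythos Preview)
Your proof is correct; it is the classical Hedberg pointwise-inequality argument. The paper does not give a separate proof of this Euclidean statement---it is quoted as Sobolev's theorem---but its proof of the homogeneous-group generalization specializes to the present case by taking $\mathbb{G}=(\mathbb{R}^N,+)$, so that is the relevant comparison.

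The two routes differ. The paper also splits the kernel $|x|^{-\lambda}=K_1+K_2$ at a scale $\mu$, but instead of a pointwise bound via the maximal function it controls $K_1*u$ in $L^p$ by Young's inequality (since $K_1\in L^1$) and $K_2*u$ in $L^\infty$ by Young's inequality (since $K_2\in L^{p'}$), passes to the distribution function via Chebyshev, and optimizes $\mu$ to obtain the weak-type $(p,q)$ estimate; the strong bound then comes from Marcinkiewicz interpolation. Your route trades the interpolation step for the $L^p$-boundedness of the Hardy--Littlewood maximal operator and, as a bonus, yields the pointwise Hedberg inequality $|I_\lambda u(x)|\le C\,\|u\|_{L^p}^{1-p/q}(Mu(x))^{p/q}$, which is sometimes useful in its own right. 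Both arguments are short and transfer to homogeneous Lie groups (the maximal inequality holds on spaces of homogeneous type), but the paper's choice avoids invoking the maximal theorem, keeping the argument self-contained modulo only Young's inequality and Marcinkiewicz.
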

Later, in \cite{StWe58} Stein and Weiss obtained the following two-weight extention of the  Hardy-Littlewood-Sobolev inequality, which is known as the Stein-Weiss inequality.
\begin{thm}\label{Classiacal_Stein-Weiss_inequality}
Let $0<\lambda<N$, $1<p<\infty$, $\alpha<\frac{N(p-1)}{p}$, $\beta<\frac{N}{q}$, $\alpha+\beta\geq0$ and $\frac{1}{q}=\frac{1}{p}+\frac{\lambda+\alpha+\beta}{N}-1$. If $1<p\leq q<\infty$, then
\begin{equation}
\||x|^{-\beta}I_{\lambda}u\|_{L^{q}(\mathbb{R}^{N})}\leq C \||x|^{\alpha}u\|_{L^{p}(\mathbb{R}^{N})},
\end{equation}
where $C$ is a positive constant independent of $u$.
\end{thm}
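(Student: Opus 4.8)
The plan is to reformulate the inequality as an $L^p(\mathbb{R}^N)\to L^q(\mathbb{R}^N)$ bound for an integral operator with a homogeneous kernel, and then to prove that bound by splitting the kernel into three pieces according to the relative sizes of $|x|$ and $|y|$. Putting $f(y)=|y|^{\alpha}u(y)$, the asserted estimate is equivalent to $\|Tf\|_{L^q(\mathbb{R}^N)}\le C\|f\|_{L^p(\mathbb{R}^N)}$ for
\[
Tf(x)=\int_{\mathbb{R}^N}\frac{f(y)}{|x|^{\beta}\,|x-y|^{\lambda}\,|y|^{\alpha}}\,dy ,
\]
whose kernel is homogeneous of degree $-(\lambda+\alpha+\beta)$; the balance hypothesis $\frac1q=\frac1p+\frac{\lambda+\alpha+\beta}{N}-1$ is exactly the scaling identity $\lambda+\alpha+\beta=\frac{N}{p'}+\frac{N}{q}$ which makes such a bound scale invariant. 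Decomposing the $y$-integral over $\{|y|\le|x|/2\}$, $\{|x|/2<|y|<2|x|\}$ and $\{|y|\ge2|x|\}$ yields $T=T_{\mathrm{near}}+T_{\mathrm{diag}}+T_{\mathrm{far}}$, and I would organise all three estimates around the dyadic annuli $A_k=\{2^k\le|x|<2^{k+1}\}$, $k\in\mathbb{Z}$.

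On the near region $|x-y|\sim|x|$ and on the far region $|x-y|\sim|y|$, so for $x\in A_k$ each of $T_{\mathrm{near}}f(x)$ and $T_{\mathrm{far}}f(x)$ is controlled by a weighted sum of the averages $\int_{A_j}|y|^{-\alpha}|f(y)|\,dy$ over $j\le k$, respectively $j\ge k$. Estimating each such average by Hölder's inequality on $A_j$ and inserting the scaling identity, I expect to obtain $\|T_{\mathrm{near}}f\|_{L^q(A_k)}\lesssim\sum_{j\le k}2^{-(k-j)\delta}\|f\|_{L^p(A_j)}$ with $\delta=\tfrac{N}{p'}-\alpha$, and, symmetrically, $\|T_{\mathrm{far}}f\|_{L^q(A_k)}\lesssim\sum_{j\ge k}2^{-(j-k)\gamma}\|f\|_{L^p(A_j)}$ with $\gamma=\tfrac{N}{q}-\beta$. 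The hypotheses $\alpha<\tfrac{N(p-1)}{p}$ and $\beta<\tfrac{N}{q}$ are precisely what guarantee $\delta,\gamma>0$, so that the sum over $k$ is a convolution with the $\ell^1(\mathbb{Z})$ kernels $(2^{-m\delta})$, $(2^{-m\gamma})$; by Young's inequality for sequences and the embedding $\ell^p(\mathbb{Z})\hookrightarrow\ell^q(\mathbb{Z})$ (valid since $p\le q$, using that the $A_j$ partition $\mathbb{R}^N$) this gives $\|T_{\mathrm{near}}f\|_{L^q}+\|T_{\mathrm{far}}f\|_{L^q}\lesssim\|f\|_{L^p}$.

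For the diagonal term $|x|\sim|y|$, so on $A_k$ the weight $|x|^{-\beta}|y|^{-\alpha}$ is comparable to $2^{-k(\alpha+\beta)}$ while the active part of $f$ is supported in $\widetilde A_k=\{2^{k-1}\le|y|<2^{k+2}\}$; hence $\|T_{\mathrm{diag}}f\|_{L^q(A_k)}\lesssim 2^{-k(\alpha+\beta)}\,\|I_\lambda(|f|\mathbf 1_{\widetilde A_k})\|_{L^q(A_k)}$. I would bound the last norm by the unweighted Hardy--Littlewood--Sobolev inequality (Theorem~\ref{THM:HLS}) with the exponent pair $(p,\widetilde q)$, $\frac1{\widetilde q}=\frac1p+\frac\lambda N-1$, followed by Hölder's inequality on the finite-measure set $A_k$ (or, in the borderline range $\lambda\le N/p'$, by an elementary endpoint bound for $I_\lambda$ applied to the localised function). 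The volume factor $|A_k|^{1/q-1/\widetilde q}\sim 2^{k(\alpha+\beta)}$ produced by this Hölder step — for which one needs $\widetilde q\ge q$, i.e.\ exactly $\alpha+\beta\ge0$ — cancels the weight $2^{-k(\alpha+\beta)}$, leaving $\|T_{\mathrm{diag}}f\|_{L^q(A_k)}\lesssim\|f\|_{L^p(\widetilde A_k)}$ with a constant independent of $k$; summing over $k$ via $\ell^p\hookrightarrow\ell^q$ and the bounded overlap of the $\widetilde A_k$ completes the proof.

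The step I expect to be the main obstacle is the diagonal term: it is the only place where one genuinely needs a smoothing estimate (the plain Hardy--Littlewood--Sobolev inequality) rather than a crude pointwise bound, and one has to be careful to extract a \emph{scale invariant} inequality on each annulus so that the dyadic pieces can be summed — this is also exactly where the remaining hypotheses $\alpha+\beta\ge0$ and $p\le q$ enter. Everything else is bookkeeping: checking that the three regions, the dyadic shells, and the exponents are aligned so that each geometric series converges. On a general homogeneous Lie group the same scheme should go through, with the Euclidean triangle inequality replaced by the comparability of the homogeneous norm of a product with the norms of its factors.
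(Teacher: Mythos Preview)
Your three-region split $T=T_{\mathrm{near}}+T_{\mathrm{diag}}+T_{\mathrm{far}}$ coincides with the paper's decomposition $I_1+I_2+I_3$ in the proof of Theorem~\ref{stein-weiss3} (which specialises to the present statement), but the tools applied to each piece differ. For the near and far regions the paper does not run a dyadic argument; it invokes the weighted integral Hardy inequalities of Theorem~\ref{integral_hardy} as a black box and merely checks the supremum conditions $A_1,A_2<\infty$ for the weights $W(x)=|x|^{-(\beta+\lambda)q}$, $U(y)=|y|^{\alpha p}$ (respectively $W(x)=|x|^{-\beta q}$, $U(y)=|y|^{(\alpha+\lambda)p}$). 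Your dyadic-shells-plus-$\ell^1$-Young scheme is in effect a direct proof of those very Hardy inequalities in this instance, so the two routes agree in content---yours is self-contained while the paper's is more modular. For the diagonal piece the paper is slicker when $p<q$: instead of localising to annuli and applying Hardy--Littlewood--Sobolev with an auxiliary exponent $\widetilde q$ (which forces you to treat the range $\lambda\le N/p'$ separately), it observes that on $\{|x|/2<|y|<2|x|\}$ one has $|x-y|^{\alpha+\beta}\lesssim|x|^{\beta}|y|^{\alpha}$ (this is where $\alpha+\beta\ge0$ enters), so the full weighted kernel is dominated pointwise by $|x-y|^{-(\lambda+\alpha+\beta)}$ and a single global application of Theorem~\ref{THM:HLS} with parameter $\lambda+\alpha+\beta$ finishes. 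Only at the endpoint $p=q$ does the paper fall back on a dyadic argument, and there it is essentially yours with the simplification that Young's inequality with the $L^1$ norm of the truncated kernel suffices. Your plan is correct; the diagonal step is just slightly more laborious than necessary.
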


The Hardy-Littlewood-Sobolev inequality on the Heisenberg group was obtained by Folland and Stein in \cite{FS74}.
  In  \cite{GMS} the authors studied the Stein-Weiss inequality on the Carnot groups. Note that in \cite{HLZ} the authors also proved an analogue of the Stein-Weiss inequality on the Heisenberg groups.
In \cite{ZW} author proved Stein-Weiss inequality on product spaces. In \cite{JD} author proved the Stein-Weiss inequality on the Euclidean half-space. In the works \cite{CF}, \cite{FM}, \cite{MW}  and \cite{Per} authors were studied the regularity of fractional integrals on Euclidean spaces. In this note we first give a simple proof for the Hardy-Littlewood-Sobolev inequality on general homogeneous groups, recapturing the result of \cite[Theorem 4.1]{RY} where a much heavier machinery was used.
In the proof we follow the method of Stein and Weiss, however, special properties of  homogeneous norms of the homogeneous Lie groups play a key
role in our calculations. Furthermore, in Theorem \ref{stein-weiss3} we establish the Stein-Weiss on general homogeneous groups based on the integral Hardy inequalities established in \cite{RY}.
Of course, the obtained result recovers the previously  known results of Abelian (Euclidean), Heisenberg, Carnot groups since the class of the homogeneous Lie groups contains those and since we can work with an arbitrary homogeneous quasi-norm.
Note that in this direction systematic studies of different functional inequalities on  general homogeneous (Lie) groups were initiated by the paper \cite{RSAM}. We refer to this and other papers by the authors (e.g. \cite{RSY1}) for further discussions.

We also note that the best constant in the Hardy-Littlewood-Sobolev inequality on the Heisenberg group is now known, see Frank and Lieb \cite{FL12} (in the Euclidean case this was done earlier by Lieb in \cite{Lie83}). The expression for the best constant depends on the particular quasi-norm used and may change for a different choice of the quasi-norm.

The main results of this paper are as follows:
\begin{itemize}
\item {\bf Hardy-Littlewood-Sobolev inequality}: Let $\mathbb{G}$ be a homogeneous group of homogeneous dimension $Q$ and let $|\cdot|$ be an arbitrary homogeneous quasi-norm on $\mathbb{G}$. Let $1<p<q<\infty,\,\,0<\lambda<Q$, $\frac{1}{q}=\frac{1}{p}+\frac{\lambda}{Q}-1$. Then for all $u\in L^{p}(\mathbb{G})$ and $h\in L^{q'}(\mathbb{G})$ we have
	\begin{equation}\label{EQ:HLSi1}
	\left|\int_{\mathbb{G}}\int_{\mathbb{G}}\frac{u(y)h(x)}{|y^{-1} x|^{\lambda}}dxdy\right|\leq C\|u\|_{L^{p}(\mathbb{G})}\|h\|_{L^{q'}(\mathbb{G})},
	\end{equation}
	where $C$ is a positive constant independent of $u$ and $h$.
	
	For the formulation similar to that of Theorem \ref{THM:HLS} see
	Theorem \ref{Trsob}.
\item {\bf Stein-Weiss inequality}: Let $\mathbb{G}$ be a homogeneous group of homogeneous dimension $Q$ and let $|\cdot|$ be an arbitrary homogeneous quasi-norm on $\mathbb{G}$.
Let $0<\lambda<Q$, $1<p\leq q<\infty$, $\alpha<\frac{Q}{p'}$, $\beta<\frac{Q}{q}$, $\alpha+\beta\geq0$, $\frac{1}{q}=\frac{1}{p}+\frac{\alpha+\beta+\lambda}{Q}-1$, where $\frac{1}{p}+\frac{1}{p'}=1$ and $\frac{1}{q}+\frac{1}{q'}=1$. Then we have
\begin{equation}\label{EQ:HLSi2}
\left|\int_{\mathbb{G}}\frac{u(y)h(x)}{|x|^{\beta}|y^{-1}x|^{\lambda}|y|^{\alpha}}dxdy\right|\leq C\|u\|_{L^{p}(\mathbb{G})}\|h\|_{L^{q'}(\mathbb{G})},
\end{equation}
where $C$ is a positive constant independent of $u$ and $h$.

For the formulation similar to that of Theorem \ref{Classiacal_Stein-Weiss_inequality} see
	Theorem \ref{stein-weiss3}.
	
Although \eqref{EQ:HLSi1} is clearly contained in \eqref{EQ:HLSi2}, we still keep them as separate statements since the Hardy-Littlewood-Sobolev inequality  \eqref{EQ:HLSi1} allows for a simple proof which is much more transparent than that of the Stein-Weiss inequality \eqref{EQ:HLSi2}. The present proof is also much simpler than the original proof of \eqref{EQ:HLSi1} in
\cite{RY}.
\end{itemize}

 Finally, let us note that the heavier machinery developed in \cite{RY} also yielded a differential version of the Stein-Weiss inequality (which may be also called {Stein-Weiss-Sobolev inequality}), however, in a more special case of graded groups as follows (see \cite[Theorem 5.12]{RY} for details and the proof):

\begin{itemize}
\item {\bf Differential Stein-Weiss (or Stein-Weiss-Sobolev) inequality}:
Let $\mathbb{G}$ be a graded Lie group of homogeneous dimension $Q$ and let $|\cdot|$ be an arbitrary homogeneous quasi-norm. Let $1<p,q<\infty$, $0\leq a<Q/p$ and $0\leq b<Q/q$. Let $0<\lambda<Q$, $0\leq \alpha <a+Q/p'$ and $0\leq \beta\leq b$ be such that $(Q-ap)/(pQ)+(Q-q(b-\beta))/(qQ)+(\alpha+\lambda)/Q=2$ and $\alpha+\lambda\leq Q$, where $1/p+1/p'=1$. Then there exists a positive constant $C=C(Q,\lambda, p, \alpha, \beta, a, b)$ such that
\begin{equation}\label{HLS_ineq1_grad}
\left|\int_{\mathbb{G}}\int_{\mathbb{G}}\frac{\overline{f(x)}g(y)}{|x|^{\alpha}|y^{-1}x|^{\lambda}|y|^{\beta}}dxdy\right|\leq C\|f\|_{\dot{L}^{p}_{a}(\mathbb{G})}\|g\|_{\dot{L}^{q}_{b}(\mathbb{G})}
\end{equation}
holds for all $f\in \dot{L}^{p}_{a}(\mathbb{G})$ and $g\in \dot{L}^{q}_{b}(\mathbb{G})$, where $\dot{L}^{p}_{a}(\mathbb{G})$ stands for a homogeneous Sobolev space of order $a$ over $L^p$ on the graded Lie group $\mathbb{G}.$
\end{itemize}

\section{Stein-Weiss inequality on homogeneous group}
\label{SEC:2}
 Let us recall that a Lie group (on $\mathbb{R}^{N}$) $\mathbb{G}$ with the dilation
$$D_{\lambda}(x):=(\lambda^{\nu_{1}}x_{1},\ldots,\lambda^{\nu_{N}}x_{N}),\; \nu_{1},\ldots, \nu_{n}>0,\; D_{\lambda}:\mathbb{R}^{N}\rightarrow\mathbb{R}^{N},$$
which is an automorphism of the group $\mathbb{G}$ for each $\lambda>0,$
is called a {\em homogeneous (Lie) group}. For simplicity, throughout this paper we use the notation $\lambda x$ for the dilation $D_{\lambda}.$  The homogeneous dimension of the homogeneous group $\mathbb{G}$ is denoted by $Q:=\nu_{1}+\ldots+\nu_{N}.$
Also, in this note we denote a homogeneous quasi-norm on $\mathbb{G}$ by $|x|$, which
is a continuous non-negative function
\begin{equation}
\mathbb{G}\ni x\mapsto |x|\in[0,\infty),
\end{equation}
with the properties

\begin{itemize}
	\item[i)] $|x|=|x^{-1}|$ for all $x\in\mathbb{G}$,
	\item[ii)] $|\lambda x|=\lambda |x|$ for all $x\in \mathbb{G}$ and $\lambda>0$,
	\item[iii)] $|x|=0$ iff $x=0$.
\end{itemize}
Moreover, the following polarisation formula on homogeneous Lie groups will be used in our proofs:
there is a (unique)
positive Borel measure $\sigma$ on the
unit quasi-sphere
$
\mathfrak{S}:=\{x\in \mathbb{G}:\,|x|=1\},
$
so that for every $f\in L^{1}(\mathbb{G})$ we have
\begin{equation}\label{EQ:polar}
\int_{\mathbb{G}}f(x)dx=\int_{0}^{\infty}
\int_{\mathfrak{S}}f(ry)r^{Q-1}d\sigma(y)dr.
\end{equation}
The quasi-ball centred at $x \in \mathbb{G}$ with radius $R > 0$ can be defined by
\begin{equation}
B(x,R) := \{y \in\mathbb {G} : |x^{-1} y|< R\}.
\end{equation}
We refer to \cite{FS1} for the original appearance of such groups, and to \cite{FR} for a recent comprehensive treatment.

Let us consider the integral operator
\begin{equation}
I_{\lambda,|\cdot|}u(x)=\int_{\mathbb{G}}\frac{u(y)}{|y^{-1} x|^{\lambda}}dy,\,\,\,0<\lambda<Q.
\end{equation}
Note that when $Q>\alpha>0$ and $\lambda=Q-\alpha$ we get the Riesz potential $I_{\lambda,|\cdot|}=I_{Q-\alpha,|\cdot|}$.
First we give a short proof of a version of the Hardy-Littlewood-Sobolev inequality on $\mathbb{G}$.
\begin{thm}\label{Trsob}
Let $\mathbb{G}$ be a homogeneous group of homogeneous dimension $Q$ and let $|\cdot|$ be an arbitrary homogeneous quasi-norm on $\mathbb{G}$.
Let $1<p<q<\infty,\,\,0<\lambda<Q$, $\frac{1}{q}=\frac{1}{p}+\frac{\lambda}{Q}-1$, and $u\in L^{p}(\mathbb{G})$. Then we have
\begin{equation}\label{rieszsobolev}
\|I_{\lambda,|\cdot|}u\|_{L^{q}(\mathbb{G})}\leq C \|u\|_{L^{p}(\mathbb{G})},
\end{equation}
where $C$ is a positive constant independent of $u.$
\end{thm}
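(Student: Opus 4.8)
The plan is to establish \eqref{rieszsobolev} by the classical Stein--Weiss method of splitting the convolution kernel into a "near" and a "far" part, adapted to the homogeneous group setting via the polarisation formula \eqref{EQ:polar} and the basic properties i)--iii) of the quasi-norm. By the dual formulation it suffices to prove \eqref{EQ:HLSi1}, i.e. to bound the bilinear form
\begin{equation*}
\left|\int_{\mathbb{G}}\int_{\mathbb{G}}\frac{u(y)h(x)}{|y^{-1}x|^{\lambda}}\,dx\,dy\right|\leq C\|u\|_{L^{p}(\mathbb{G})}\|h\|_{L^{q'}(\mathbb{G})}
\end{equation*}
for all $u\in L^p(\mathbb{G})$, $h\in L^{q'}(\mathbb{G})$; the equivalence with \eqref{rieszsobolev} is just duality in $L^q$ together with the (left-invariance and symmetry) identity $\int_{\mathbb{G}}(I_{\lambda,|\cdot|}u)\,\overline{h}=\int\int u(y)\overline{h(x)}|y^{-1}x|^{-\lambda}\,dx\,dy$. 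I would first reduce to $u,h\geq 0$.

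Next I would fix a point $x$ (with $h(x)\neq 0$) and a threshold $R>0$ to be chosen, and split the inner integral $\int_{\mathbb{G}}u(y)|y^{-1}x|^{-\lambda}\,dy = \int_{|y^{-1}x|<R} + \int_{|y^{-1}x|\geq R}$. For the near piece, one decomposes the ball $\{|y^{-1}x|<R\}$ into dyadic annuli $\{2^{-k-1}R\leq |y^{-1}x|<2^{-k}R\}$, $k\geq 0$, on each of which the kernel is comparable to $(2^{-k}R)^{-\lambda}$; applying H\"older's inequality on each annulus and summing the resulting geometric series (which converges precisely because $\lambda<Q$ and $p>1$), one gets a bound of the form $C R^{Q/p'-\lambda}\|u\|_{L^p}$, using that $|B(x,\rho)|=c\rho^Q$ from \eqref{EQ:polar}. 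The far piece is handled directly by H\"older over $\mathbb{G}$: since $q>p$ forces $\lambda p'>Q$, the integral $\int_{|y^{-1}x|\geq R}|y^{-1}x|^{-\lambda p'}\,dy$ converges (again by \eqref{EQ:polar} and property iii) giving no singularity at infinity issues) and equals $C R^{Q/p'-\lambda}\|u\|_{L^p}$ as well, but one should instead split so that the far estimate does not depend on $\|u\|_{L^p}$ alone — the standard trick is to bound the near part by $R^{\theta}\|u\|_{L^p}$ and the far part by $R^{-\mu}\|u\|_{L^p}$ with $\theta,\mu>0$ and then optimize $R=R(x)$ so that $I_{\lambda,|\cdot|}u(x)\leq C\|u\|_{L^p}^{p/q}$ (this is where the scaling relation $\tfrac1q=\tfrac1p+\tfrac{\lambda}{Q}-1$ enters to make the exponents match). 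Finally, raising to the power $q$, integrating in $x$ against $h$ via H\"older, and using $\|h\|_{L^{q'}}$ gives the claim.

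The main technical point — and the place where the homogeneous-group structure does real work — is the computation of the measures of quasi-balls and quasi-annuli and the convergence of the kernel integrals: here one relies on property ii) (homogeneity of $|\cdot|$ under dilations) and the polarisation formula \eqref{EQ:polar} to get $|\{a\leq |y^{-1}x|<b\}| = c\,(b^Q-a^Q)$ uniformly in $x$, which replaces the Euclidean rotation/translation invariance. The anisotropy of the dilations and the fact that $|\cdot|$ is only a \emph{quasi}-norm (no triangle inequality) are harmless because the argument only ever uses $|\lambda x|=\lambda|x|$, $|x^{-1}|=|x|$, left-invariance of Haar measure, and \eqref{EQ:polar}; no triangle inequality is needed. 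The one genuine subtlety is to verify that the left translation $y\mapsto y^{-1}x$ is Haar-measure preserving and that the $x$-dependence drops out of all the annulus computations, so that the optimized radius $R(x)$ and the final H\"older step in $x$ go through cleanly.
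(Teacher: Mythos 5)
There is a genuine gap in your treatment of the ``near'' part, and it is fatal to the argument as written. Under the scaling relation $\frac{1}{q}=\frac{1}{p}+\frac{\lambda}{Q}-1$ one has $\frac{\lambda}{Q}=\frac{1}{q}+\frac{1}{p'}>\frac{1}{p'}$, i.e.\ $\lambda>\frac{Q}{p'}$. Hence the exponent $\frac{Q}{p'}-\lambda$ is \emph{negative}, and your geometric series over the dyadic annuli, $\sum_{k\geq 0}(2^{-k}R)^{Q/p'-\lambda}$, diverges as $k\to\infty$; the convergence condition is $\lambda<\frac{Q}{p'}$, not ``$\lambda<Q$ and $p>1$'' as you claim. (Your far-part computation is consistent with this: $\lambda p'>Q$ is exactly what makes the tail integral converge, and it is exactly what kills the near-part series.) There is also a structural problem you half-notice but do not resolve: if both the near and the far pieces are bounded by $C R^{\pm\theta}\|u\|_{L^p}$ with no $x$-dependence, then optimizing in $R$ can only produce a pointwise bound $|I_{\lambda,|\cdot|}u(x)|\leq C\|u\|_{L^p}$ uniform in $x$, i.e.\ an $L^\infty$ bound. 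Since $\mathbb{G}$ has infinite Haar measure, such a bound cannot yield $\|I_{\lambda,|\cdot|}u\|_{L^q}\leq C\|u\|_{L^p}$, nor does pairing with $h$ give $\|h\|_{L^{q'}}$ rather than $\|h\|_{L^1}$. To make your route work you must replace H\"older on the annuli by the Hardy--Littlewood maximal function: the near part is then $\leq CR^{Q-\lambda}Mu(x)$ (the series $\sum_k(2^{-k}R)^{Q-\lambda}$ does converge since $\lambda<Q$), optimization in $R$ gives Hedberg's pointwise inequality $|I_{\lambda,|\cdot|}u(x)|\leq C(Mu(x))^{p/q}\|u\|_{L^p}^{1-p/q}$, and one concludes from the $L^p$-boundedness of $M$ on homogeneous groups --- an ingredient you would then need to cite or prove.

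For comparison, the paper avoids the maximal function entirely: it splits the \emph{kernel} $K=|x|^{-\lambda}$ at a fixed threshold $\mu$ independent of $x$, estimates $m\{|K_1*u|>\zeta\}$ by Chebyshev and Young's inequality with $\|K_1\|_{L^1}$, shows $\|K_2*u\|_{L^\infty}\leq\|K_2\|_{L^{p'}}\|u\|_{L^p}$ and chooses $\mu=\mu(\zeta)$ so that this is $\leq\zeta$, thereby obtaining the weak-type $(p,q)$ bound $m\{|K*u|>2\zeta\}\leq C\zeta^{-q}\|u\|_{L^p}^q$, and then invokes the Marcinkiewicz interpolation theorem. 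That is, the $x$-dependent information your optimization discards is retained there in the form of the distribution function. Either repair (maximal function, or weak-type plus interpolation) is legitimate, but as it stands your proof does not close.
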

\begin{rem}
	With the assumptions of  Theorem \ref{Trsob} and $h\in L^{q'}(\mathbb{G}),$ we have the following Hardy-Littlewood-Sobolev inequality
	\begin{equation}
	\left|\int_{\mathbb{G}}\int_{\mathbb{G}}\frac{u(y)h(x)}{|y^{-1} x|^{\lambda}}dxdy\right|\leq C\|u\|_{L^{p}(\mathbb{G})}\|h\|_{L^{q'}(\mathbb{G})},
	\end{equation}
	where $C$ is a positive constant independent of $u$ and $h$. This gives \eqref{EQ:HLSi1}.
\end{rem}
\begin{proof}[Proof of Theorem \ref{Trsob}]
As in the Euclidean case we will show that there is a constant $C>0$, such that
\begin{equation}\label{needtoprove}
m\{x:|K*u(x)|>\zeta\}\leq C\frac{\|u\|_{L^{p}(\mathbb{G})}^{q}}{\zeta^{q}},
\end{equation}
where $m$ is the Haar measure on $\mathbb{G}$, $K(x)=|x|^{-\lambda}$ and $I_{\lambda,|\cdot|}u(x)=K*u(x)$, where $*$ is  convolution. This implies inequality \eqref{rieszsobolev} via the Marcinkiewicz interpolation theorem.
 Let $K(x)=K_{1}(x)+K_{2}(x)$, where
\begin{equation}\label{2.7}
K_{1}(x):=
 \begin{cases}
   K(x),\,\,\,\text{if}\,\,\,|x|\leq\mu, \\
   0,\,\,\,\text{if}\,\,\,|x|>\mu,
 \end{cases}
\text{and}\,\,\,\,
K_{2}(x):=
 \begin{cases}
   K(x),\,\,\,\text{if}\,\,\,|x|>\mu, \\
   0,\,\,\,\text{if}\,\,\,|x|\leq\mu.
 \end{cases}
\end{equation}
Here $\mu$ is a positive constant. We have $I_{\lambda,|\cdot|}u(x)=K*u(x)=K_{1}*u(x)+K_{2}*u(x)$, so
\begin{equation}\label{ocenkamery}
m\{x:|K*u(x)|>2\zeta\}\leq m\{x:|K_{1}*u(x)|>\zeta\}+m\{x:|K_{2}*u(x)|>\zeta\}.
\end{equation}
It is enough to prove inequality \eqref{needtoprove} with $2\zeta$ instead of $\zeta$ in the left-hand side of
the inequality. Without loss of generality we can assume $\|u\|_{L^{p}(\mathbb{G})}=1$ and by using Chebychev's and Minkowski's inequalities, we get
\begin{multline}\label{2.9}
m\{x:|K_{1}*u(x)|>\zeta\}\leq\frac{\int_{|K_{1}*u|>\zeta}|K_{1}*u|^{p}dx}{\zeta^{p}}\\
\leq\frac{\|K_{1}*u\|^{p}_{L^{p}(\mathbb{G})}}{\zeta^{p}}\leq\frac{\|K_{1}\|^{p}_{L^{1}(\mathbb{G})}\|u\|^{p}_{L^{p}(\mathbb{G})}}{\zeta^{p}}=\frac{\|K_{1}\|^{p}_{L^{1}(\mathbb{G})}}{\zeta^{p}}.
\end{multline}
 By using \eqref{EQ:polar} and \eqref{2.7}, we compute
\begin{multline}
\|K_{1}\|_{L^{1}(\mathbb{G})}=\int_{0<|x|\leq\mu}|x|^{-\lambda}dx=\int_{0}^{\mu}r^{Q-1}r^{-\lambda}dR\int_{\mathfrak{S}}|y|^{-\lambda}d\sigma(y)\\
=|\mathfrak{S}| \int_{0}^{\mu}r^{Q-\lambda-1}dr= \frac{|\mathfrak {S}|}{Q-\lambda}(r^{Q-\lambda}|^{\mu}_{0})=\frac{|\mathfrak {S}|}{Q-\lambda} \mu^{Q-\lambda},
\end{multline}
where $|\mathfrak{S}|$ is the $Q - 1$ dimensional surface measure of the unit
quasi-sphere $\mathfrak{S}$.
By using this in \eqref{2.9}, we obtain
\begin{equation}
m\{x:|K_{1}*u(x)|>\zeta\}\leq \left(\frac{|\mathfrak {S}|}{Q-\lambda}\right)^{p} \frac{\mu^{(Q-\lambda)p}}{\zeta^{p}}.
\end{equation}
Similarly by using Young's inequality, \eqref{EQ:polar} and the assumptions, we get
\begin{multline}
\|K_{2}*u\|_{L^{\infty}(\mathbb{G})}\leq\|K_{2}\|_{L^{p'}(\mathbb{G})}\|u\|_{L^{p}(\mathbb{G})}=\left(\int_{\mu}^{\infty}r^{-\lambda p'}r^{Q-1}dr\int_{\mathfrak{S}}|y|^{-\lambda p'}d\sigma(y)\right)^{\frac{1}{p'}}\\
 =\left(\frac{|\mathfrak{S}|}{Q-\lambda p'}\right)^{\frac{1}{p'}}\left(\int_{\mu}^{\infty}r^{Q-\lambda p'-1}dr\right)^{\frac{1}{p'}}=\left(\frac{|\mathfrak{S}|}{Q-\lambda p'}\right)^{\frac{1}{p'}} (r^{Q-\lambda p'}|^{\infty}_{\mu})^{\frac{1}{p'}}\\
 =\left(\frac{|\mathfrak{S}|}{\lambda p'-Q}\right)^{\frac{1}{p'}}\mu^{-\frac{Q}{q}},
\end{multline}
since from the assumptions, we get $\frac{Q-\lambda p'}{p'}=\frac{Q}{p'}-\lambda=Q(1-\frac{1}{p}-\frac{\lambda}{Q})=-\frac{Q}{q}$. Moreover, if $\left(\frac{|\mathfrak{S}|}{\lambda p'-Q}\right)^{\frac{1}{p'}}\mu^{-\frac{Q}{q}}=\zeta$, then $\mu=\left(\frac{|\mathfrak{S}|}{\lambda p'-Q}\right)^{-\frac{q}{Qp'}} \zeta^{-\frac{\theta}{Q}}$, so we have $\|K_{2}*u\|_{L^{\infty}(\mathbb{G})}\leq\zeta$.  Thus, we have $m\{x:|K_{2}*u|>\zeta\}=0.$
Combining these facts with \eqref{ocenkamery}, $\|u\|_{L^{p}(\mathbb{G})}=1$ and the assumptions  we establish
\begin{multline}
m\{x:|K*u|>2\zeta\}
\leq \left(\frac{|\mathfrak {S}|}{Q-\lambda}\right)^{p}\frac{\mu^{(Q-\lambda)p}}{\zeta^{p}}\\
=\left(\frac{|\mathfrak {S}|}{Q-\lambda}\right)^{p}\left(\frac{|\mathfrak{S}|}{\lambda p'-Q}\right)^{-\frac{q(Q-\lambda)p}{Qp'}}\zeta^{\frac{-(Q-\lambda)pq}{Q}-p}
\leq C\zeta^{\frac{-(Q-\lambda)pq}{Q}-p}=C\zeta^{(\frac{\lambda}{Q}-1)pq-p}\\
=C\zeta^{(\frac{1}{q}-\frac{1}{p})pq-p}=C\zeta^{p-q-p}=C\frac{\|u\|^{q}_{L^{p}(\mathbb{G})}}{\zeta^{q}}.
\end{multline}
For completeness, let us recall two well-known ingredients.
\begin{defn}[\cite{steinbook}]\label{defin}
Let $ 1\leq p\leq\infty$, $1\leq q<\infty$ and  $V:L^{p}(\mathbb{G})\rightarrow L^{q}(\mathbb{G})$ be a operator, then $V$ is called an operator of $\textit{weak type}$ $(p,q)$ if
\begin{equation}
m\{x:|Vu|>\zeta\}\leq C\left(\frac{\|u\|_{L^{p}(\mathbb{G})}}{\zeta}\right)^{q},\,\,\,\,\zeta>0,
\end{equation}
where $C$ is a positive constant and independent by $u$.
\end{defn}
Let us also recall the classical Marcinkiewicz interpolation theorem:
\begin{thm}
Let $V$ be sublinear operator of weak type $(p_{k}, q_{k})$ with $1 \leq p_{k} \leq q_k< \infty$, $k = 0, 1$ and $q_0 < q_1$.
Then $V$ is bounded from $L^{p}(\mathbb{G})$ to $L^{q}(\mathbb{G})$ with
 \begin{equation}
 \frac{1}{p}=\frac{1-\gamma}{p_{0}}+\frac{\gamma}{p_{1}},\,\,\,\frac{1}{q}=\frac{1-\gamma}{q_{0}}+\frac{\gamma}{q_{1}},
 \end{equation}
 for any $0<\gamma<1$, namely,
 \begin{equation}
 \|Vu\|_{L^{q}(\mathbb{G})}\leq C \|u\|_{L^{p}(\mathbb{G})},
 \end{equation}
 for any $u\in L^{p}(\mathbb{G})$ and $C$ is a positive constant.

 \end{thm}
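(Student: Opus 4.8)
The plan is to run the classical Marcinkiewicz distribution-function argument; the point I would emphasise is that it uses nothing about $\mathbb{G}$ beyond the sublinearity of $V$ and the $\sigma$-finiteness of the Haar measure $m$ (which is all that the layer-cake identity, Tonelli, and Minkowski's integral inequality require), so the Euclidean proof transfers verbatim and there is no genuinely new obstacle. By the positive homogeneity of the sublinear operator $V$ it is enough to prove $\|Vu\|_{L^q(\mathbb{G})}\le C$ under the normalisation $\|u\|_{L^p(\mathbb{G})}=1$. Let $C_0,C_1$ denote the constants in the weak type $(p_0,q_0)$ and $(p_1,q_1)$ bounds. I would first dispose of the degenerate case $p_0=p_1$, in which $p=p_0=p_1$: here the two weak-type bounds give directly $m\{|Vu|>\zeta\}\le\min\{(C_0/\zeta)^{q_0},(C_1/\zeta)^{q_1}\}$, and splitting $\|Vu\|_{L^q(\mathbb{G})}^q=q\int_0^\infty\zeta^{q-1}m\{|Vu|>\zeta\}\,d\zeta$ at the crossover level $\zeta_*$ of the two bounds produces two pieces which converge because $q_0<q<q_1$; this already gives the claim. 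So from now on assume $p_0<p_1$; since $0<\gamma<1$ this yields the strict chain $p_0<p<p_1$ and $q_0<q<q_1$, while $p\le q$ follows from $p_k\le q_k$.

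For the main case I would, at each level $\zeta>0$, split $u$ at the height $\zeta^{\delta}$ with
\[
\delta:=\frac{p_0(q-q_0)}{q_0(p-p_0)}=\frac{p_1(q_1-q)}{q_1(p_1-p)}>0,
\]
the two expressions for $\delta$ being equal exactly because of the interpolation identities $\tfrac1p=\tfrac{1-\gamma}{p_0}+\tfrac{\gamma}{p_1}$ and $\tfrac1q=\tfrac{1-\gamma}{q_0}+\tfrac{\gamma}{q_1}$. Writing $g_\zeta:=u\,\mathbf{1}_{\{|u|\le\zeta^{\delta}\}}$ and $b_\zeta:=u\,\mathbf{1}_{\{|u|>\zeta^{\delta}\}}$, truncation from above puts $g_\zeta\in L^{p_1}(\mathbb{G})$ and truncation from below puts $b_\zeta\in L^{p_0}(\mathbb{G})$. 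By sublinearity $|Vu|\le|Vg_\zeta|+|Vb_\zeta|$ pointwise, so
\[
m\{|Vu|>\zeta\}\le m\{|Vg_\zeta|>\tfrac{\zeta}{2}\}+m\{|Vb_\zeta|>\tfrac{\zeta}{2}\}\le\left(\frac{2C_1\|g_\zeta\|_{L^{p_1}}}{\zeta}\right)^{q_1}+\left(\frac{2C_0\|b_\zeta\|_{L^{p_0}}}{\zeta}\right)^{q_0}
\]
by the two weak-type hypotheses.

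Next I would feed this into the layer-cake identity (legitimate as $m$ is $\sigma$-finite):
\[
\|Vu\|_{L^q(\mathbb{G})}^q=q\int_0^\infty\zeta^{q-1}m\{|Vu|>\zeta\}\,d\zeta\le\mathrm{I}+\mathrm{II},\qquad\mathrm{II}:=q(2C_0)^{q_0}\int_0^\infty\zeta^{q-1-q_0}\|b_\zeta\|_{L^{p_0}}^{q_0}\,d\zeta,
\]
with $\mathrm{I}$ the analogous integral in $g_\zeta,p_1,q_1$. To estimate $\mathrm{II}$, write $\|b_\zeta\|_{L^{p_0}}^{q_0}=\bigl(\int_{\mathbb{G}}|u(x)|^{p_0}\mathbf{1}_{\{|u(x)|>\zeta^{\delta}\}}\,dm(x)\bigr)^{q_0/p_0}$ and apply Minkowski's integral inequality (exponent $q_0/p_0\ge1$) to move the $dm$-integral outside the $d\zeta$-integral; the resulting inner integral is $|u(x)|^{q_0}\int_0^{|u(x)|^{1/\delta}}\zeta^{q-1-q_0}\,d\zeta$, which converges since $q>q_0$ and equals $\tfrac{1}{q-q_0}|u(x)|^{\,q_0+(q-q_0)/\delta}$, while the definition of $\delta$ forces $\tfrac{p_0}{q_0}\bigl(q_0+(q-q_0)/\delta\bigr)=p$, so after integrating in $x$ (and using $\|u\|_{L^p(\mathbb{G})}=1$) one gets $\mathrm{II}\le q(2C_0)^{q_0}(q-q_0)^{-1}$. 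The estimate of $\mathrm{I}$ is symmetric: $q_1/p_1\ge1$ again permits Minkowski's integral inequality, the inner integral $|u(x)|^{q_1}\int_{|u(x)|^{1/\delta}}^\infty\zeta^{q-1-q_1}\,d\zeta$ converges because $q<q_1$, and the second formula for $\delta$ makes the bookkeeping close, giving $\mathrm{I}\le q(2C_1)^{q_1}(q_1-q)^{-1}$. Summing and undoing the normalisation yields $\|Vu\|_{L^q(\mathbb{G})}\le C\|u\|_{L^p(\mathbb{G})}$.

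The only step needing real care is this exponent bookkeeping: one must pick the single splitting height $\zeta^{\delta}$ for which \emph{both} $\mathrm{I}$ and $\mathrm{II}$ reduce to $\|u\|_{L^p(\mathbb{G})}^p$, and one must accommodate the off-diagonal powers $q_k/p_k\neq1$---which is exactly why Minkowski's integral inequality, rather than a bare Tonelli swap, is the right tool, and why the hypothesis $p_k\le q_k$ (making those powers $\ge1$) is used. That the two prescriptions for $\delta$ agree is nothing but the interpolation relation tying $(p,q)$ to the endpoints $(p_k,q_k)$; everything else is soft. We note that in the application to Theorem \ref{Trsob} the relevant endpoint exponents in fact satisfy $p_k<q_k$ strictly, since there $\tfrac1q-\tfrac1p=\tfrac{\lambda}{Q}-1<0$.
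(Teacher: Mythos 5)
Your argument is correct and complete. It is the classical Stein--Weiss/Zygmund distribution-function proof of the off-diagonal Marcinkiewicz theorem: truncate $u$ at the level $\zeta^{\delta}$, use sublinearity and the two weak-type hypotheses, and close the layer-cake integral with Minkowski's integral inequality. The only nontrivial content is the exponent bookkeeping you carry out explicitly --- the two formulas for $\delta$ agree precisely because of the interpolation identities, and $q_k/p_k\ge 1$ (i.e.\ the hypothesis $p_k\le q_k$) is what licenses Minkowski's integral inequality --- and you also correctly isolate the degenerate case $p_0=p_1$, where $\delta$ is undefined and the two weak-type bounds must instead be played off against each other directly at the crossover level. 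The paper, however, does not prove this statement at all: it records it as ``the classical Marcinkiewicz interpolation theorem'' (with the notion of weak type referenced to Stein's book) and uses it as a black box in the proof of Theorem \ref{Trsob}. So there is no in-paper argument to compare against; what your write-up adds is the explicit verification that the Euclidean proof is purely measure-theoretic --- it uses only sublinearity, the layer-cake formula, Tonelli and Minkowski over the $\sigma$-finite Haar measure $m$ --- and therefore transfers verbatim to a homogeneous Lie group, which is exactly why the paper is entitled to quote the result without proof.
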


From assumptions $\frac{1}{q}=\frac{1}{p}+\frac{\lambda}{Q}-1<\frac{1}{p}$, then $q>p$. According to Definition \ref{defin}, $I_{\lambda,|\cdot|}u$ is of weak type $(p,q)$, so by using the Marcinkiewicz interpolation theorem, we prove \eqref{rieszsobolev}.

The proof of Theorem \ref{Trsob} is complete.
\end{proof}

The following statements will be useful to prove the homogeneous group version of the Stein-Weiss inequality (\cite[Theorem B*]{StWe58}). The next proposition is well-known, see e.g. {\cite[Theorem 3.1.39 and Proposition 3.1.35]{FR}} and historical references therein.

\begin{prop}\label{prop_quasi_norm}
Let $\mathbb{G}$ be a homogeneous Lie group. Then there exists a homogeneous
quasi-norm on $\mathbb{G}$ which is a norm, that is, a homogeneous quasi-norm $|\cdot|$
which satisfies the triangle inequality
\begin{equation}
|x y|\leq |x| + |y|, \,\,\,\forall x, y \in \mathbb{G}.
\end{equation}
Furthermore, all homogeneous quasi-norms on $\mathbb{G}$ are equivalent.
\end{prop}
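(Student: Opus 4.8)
The plan is to prove the two assertions of the proposition separately. The equivalence of all homogeneous quasi-norms is elementary and rests on the compactness of quasi-spheres, so I would carry it out in full; the existence of a quasi-norm satisfying the triangle inequality is the classical Hebisch--Sikora construction, for which I would invoke \cite[Theorem 3.1.39]{FR} (or reproduce its idea).

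For the equivalence, let $\rho_1,\rho_2$ be two homogeneous quasi-norms on $\mathbb{G}$. First I would record the dilation polar decomposition: with $\Sigma$ the Euclidean unit sphere of $\mathbb{R}^N$, the map $(0,\infty)\times\Sigma\to\mathbb{R}^N\setminus\{0\}$, $(r,\omega)\mapsto D_r\omega$, is a homeomorphism, because for each fixed $\omega$ the Euclidean length $r\mapsto\big(\sum_j r^{2\nu_j}\omega_j^2\big)^{1/2}$ is strictly increasing and maps $(0,\infty)$ onto $(0,\infty)$ (its derivative $\sum_j 2\nu_j r^{2\nu_j-1}\omega_j^2$ is positive for $r>0$). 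Since moreover $\sum_j r^{2\nu_j}\omega_j^2\ge r^{2\min_j\nu_j}$ for $r\ge1$, the Euclidean length of $D_r\omega$ tends to $\infty$ uniformly in $\omega$ as $r\to\infty$; combined with the continuity and strict positivity of $\rho_1$ on the compact set $\Sigma$, this forces the quasi-sphere $S_1:=\{x:\rho_1(x)=1\}$ to be bounded, hence compact. Then $\rho_2$ attains a positive minimum $m$ and a finite maximum $M$ on $S_1$, and writing $x=D_{\rho_1(x)}\big(D_{1/\rho_1(x)}x\big)$ with $D_{1/\rho_1(x)}x\in S_1$ for $x\neq 0$, property (ii) gives $m\,\rho_1(x)\le\rho_2(x)\le M\,\rho_1(x)$. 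This settles the equivalence.

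For the first assertion I would start from a concrete homogeneous quasi-norm, e.g. $\rho_0(x)=\max_j|x_j|^{1/\nu_j}$, symmetrised by passing to $\rho_0(x)+\rho_0(x^{-1})$ so that property (i) holds, and then upgrade it to a norm. The strategy is to produce a bounded open symmetric neighbourhood $\Omega$ of the origin which is convex, star-shaped for the dilations ($D_t\Omega\subseteq\Omega$ for $0<t\le1$, and $\bigcup_{t>0}D_t\Omega=\mathbb{R}^N$) and absorbs the group products of sufficiently contracted copies of itself; the dilation gauge $\|x\|:=\inf\{t>0:D_{1/t}x\in\Omega\}$ is then automatically a homogeneous quasi-norm, and these properties of $\Omega$, together with the fact that every $D_t$ is a group automorphism (so $D_{1/s}(xy)=(D_{1/s}x)(D_{1/s}y)$), force the triangle inequality $\|xy\|\le\|x\|+\|y\|$. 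Constructing such an $\Omega$ --- for instance by mollifying a quasi-ball of $\rho_0$ and contracting it slightly --- is exactly the content of \cite[Theorem 3.1.39]{FR}, whose proof I would cite rather than repeat.

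The step I expect to be the genuine obstacle is this construction of $\Omega$: unlike in the Euclidean case, convexity of the unit ball does not by itself yield the triangle inequality, since the group law is not addition but only a dilation-adapted polynomial perturbation of it, and the heart of the matter is to reconcile convexity, the dilation structure and the group multiplication. Everything else in the proposition --- the polar decomposition, compactness of quasi-spheres, and the comparison of two quasi-norms --- is routine once that construction is in hand, which is why it is reasonable here simply to cite \cite{FR}.
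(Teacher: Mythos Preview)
The paper does not actually prove this proposition: it is stated as a well-known fact and simply referred to \cite[Theorem 3.1.39 and Proposition 3.1.35]{FR}. Your proposal is correct and in fact goes further than the paper, since you sketch the equivalence argument in full and outline the Hebisch--Sikora construction before invoking the same reference; your treatment is therefore entirely consistent with, and more detailed than, what the paper does.
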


The next theorem is the integral version of Hardy inequalities on general homogeneous groups that will be instrumental in our proof.

\begin{thm}[\cite{RY}]\label{integral_hardy}
Let $\mathbb{G}$ be a homogeneous
group of homogeneous dimension $Q$ and let $1 < p \leq q < \infty$. Let $W(x)$ and $U(x)$,  be
positive functions on $\mathbb{G}$. Then we have the following properties:

(1) The inequality
\begin{equation}\label{5.2}
\left(\int_{\mathbb{G}}\left(\int_{B(0,|x|)}f(z)dz\right)^{q}W(x)dx\right)^{\frac{1}{q}}\leq C_{1} \left(\int_{\mathbb{G}}f^{p}(x)U(x)dx\right)^{\frac{1}{p}}
\end{equation}
holds for all $f\geq0$ a.e. on $\mathbb{G}$ if only if
\begin{equation}\label{5.2.1}
A_{1}:=\sup_{R>0}\left(\int_{\mathbb{G}\setminus B(0,|x|)}W(x)dx\right)^{\frac{1}{q}}\left(\int_{B(0,|x|)}U^{1-p'}(x)dx\right)^{\frac{1}{p'}}<\infty.
\end{equation}
(2) The inequality
\begin{equation}\label{5.4}
\left(\int_{\mathbb{G}}\left(\int_{\mathbb{G}\setminus B(0,|x|)}f(z)dz\right)^{q}W(x)dx\right)^{\frac{1}{q}}\leq C_{2} \left(\int_{\mathbb{G}}f^{p}(x)U(x)dx\right)^{\frac{1}{p}},
\end{equation}
holds for all $f \geq 0$ if and only if
\begin{equation}\label{5.4.1}
A_{2}:=\sup_{R>0}\left(\int_{B(0,|x|)}W(x)dx\right)^{\frac{1}{q}}\left(\int_{\mathbb{G}\setminus B(0,|x|)}U^{1-p'}(x)dx\right)^{\frac{1}{p'}}<\infty.
\end{equation}
(3) If $\{C_i\}^{2}_{i=1}$ are the smallest constants for which \eqref{5.2} and \eqref{5.4} hold, then
\begin{equation}
A_{i} \leq C_{i} \leq (p')^{\frac{1}{p'}}p^{\frac{1}{q}} A_{i}, \,\,\,i = 1, 2.
\end{equation}
\end{thm}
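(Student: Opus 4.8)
The plan is to reduce Theorem~\ref{integral_hardy} to the classical one-dimensional weighted Hardy inequality on the half-line by means of the polar decomposition \eqref{EQ:polar}. Recall that one-dimensional fact: for $1<p\le q<\infty$ and positive weights $w,u$ on $(0,\infty)$, the estimate $\left(\int_0^\infty\left(\int_0^r g(s)\,ds\right)^q w(r)\,dr\right)^{1/q}\le C\left(\int_0^\infty g(s)^p u(s)\,ds\right)^{1/p}$ holds for every $g\ge0$ if and only if $B_1:=\sup_{R>0}\left(\int_R^\infty w\right)^{1/q}\left(\int_0^R u^{1-p'}\right)^{1/p'}<\infty$, and the least admissible $C$ satisfies $B_1\le C\le(p')^{1/p'}p^{1/q}B_1$; the ``dual'' inequality, with $\int_0^r$ replaced by $\int_r^\infty$, is controlled in the same way by $B_2:=\sup_{R>0}\left(\int_0^R w\right)^{1/q}\left(\int_R^\infty u^{1-p'}\right)^{1/p'}$. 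I would establish part (1) in detail; part (2) is entirely symmetric, and part (3) drops out of the same argument.

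For the sufficiency in (1), given $f\ge0$ on $\mathbb{G}$ I would put $g(s):=s^{Q-1}\int_{\mathfrak{S}}f(s\omega)\,d\sigma(\omega)$, so that \eqref{EQ:polar} gives $\int_{B(0,|x|)}f(z)\,dz=\int_0^{|x|}g(s)\,ds$, a function of $|x|$ alone; applying \eqref{EQ:polar} once more to the left-hand side of \eqref{5.2} turns it into $\left(\int_0^\infty\left(\int_0^r g\right)^q w(r)\,dr\right)^{1/q}$ with $w(r):=r^{Q-1}\int_{\mathfrak{S}}W(r\omega)\,d\sigma(\omega)$. Next, with $u(s):=s^{(Q-1)(1-p)}\left(\int_{\mathfrak{S}}U(s\omega)^{1-p'}\,d\sigma(\omega)\right)^{-p/p'}$, Hölder's inequality on $\mathfrak{S}$ applied to the factorisation $f(s\omega)=\bigl(f(s\omega)U(s\omega)^{1/p}\bigr)\cdot U(s\omega)^{-1/p}$, together with the elementary identities $-p'/p=1-p'$ and $(1-p)(1-p')=1$, yields $g(s)^p u(s)\le s^{Q-1}\int_{\mathfrak{S}}f(s\omega)^pU(s\omega)\,d\sigma(\omega)$, hence $\int_0^\infty g^p u\le\int_{\mathbb{G}}f^pU$ by \eqref{EQ:polar}. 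The same exponent arithmetic gives $u(s)^{1-p'}=s^{Q-1}\int_{\mathfrak{S}}U(s\omega)^{1-p'}\,d\sigma(\omega)$, so, again by \eqref{EQ:polar}, $\int_0^R u^{1-p'}=\int_{B(0,R)}U^{1-p'}$ and $\int_R^\infty w=\int_{\mathbb{G}\setminus B(0,R)}W$; thus the one-dimensional Muckenhoupt quantity $B_1$ coincides with $A_1$. Invoking the one-dimensional theorem then produces \eqref{5.2} with $C_1\le(p')^{1/p'}p^{1/q}A_1$.

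For the necessity, and the lower bound $A_1\le C_1$, I would test \eqref{5.2} against $f=U^{1-p'}\chi_{B(0,R)}$ for fixed $R>0$ (first assuming $\int_{B(0,R)}U^{1-p'}<\infty$, the general case following by truncation): here $\int_{B(0,|x|)}f\ge\int_{B(0,R)}U^{1-p'}$ whenever $|x|\ge R$, while $\int_{\mathbb{G}}f^pU=\int_{B(0,R)}U^{1-p'}$ because $p(1-p')+1=1-p'$, so \eqref{5.2} forces $\left(\int_{\mathbb{G}\setminus B(0,R)}W\right)^{1/q}\left(\int_{B(0,R)}U^{1-p'}\right)^{1/p'}\le C_1$; taking $\sup_{R>0}$ gives $A_1\le C_1$, which together with the previous paragraph is exactly the case $i=1$ of part (3). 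Part (2) is obtained verbatim after replacing $B(0,|x|)$ by its complement throughout and using the dual one-dimensional inequality together with $B_2$ in place of $B_1$.

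I expect the only delicate point to be that the above is genuinely an equivalence and not merely an implication: the sufficiency direction uses only the one-sided estimate coming from Hölder's inequality on $\mathfrak{S}$, which costs nothing, whereas necessity and the sharp bound $A_i\le C_i$ require that Hölder inequality to be an equality, which is precisely why the extremal test functions must be taken proportional to $U^{1-p'}$ on the relevant quasi-ball (respectively its complement). A routine preliminary remark is that \eqref{EQ:polar}, stated for $L^1$ functions, extends by Tonelli's theorem to all nonnegative measurable functions, and that both inequalities are read with the usual convention that either side may equal $+\infty$, so that no a priori integrability of the weights is needed.
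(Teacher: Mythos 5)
The paper does not prove this theorem at all: it is imported verbatim from the reference \cite{RY} (where it is established by adapting the one-dimensional weighted Hardy argument directly to the group, working with quasi-balls), so there is no in-paper proof to compare against. Your reduction to the classical one-dimensional Muckenhoupt--Bradley theorem is correct and is a legitimately different, arguably more economical route. The key computations check out: with $g(s)=s^{Q-1}\int_{\mathfrak{S}}f(s\omega)\,d\sigma(\omega)$ and $u(s)=s^{(Q-1)(1-p)}\bigl(\int_{\mathfrak{S}}U(s\omega)^{1-p'}d\sigma(\omega)\bigr)^{-p/p'}$, H\"older on $\mathfrak{S}$ with exponents $p,p'$ and the identities $-p'/p=1-p'$, $(1-p)(1-p')=1$ do give $\int_0^\infty g^pu\le\int_{\mathbb{G}}f^pU$ and $u^{1-p'}(s)=s^{Q-1}\int_{\mathfrak{S}}U(s\omega)^{1-p'}d\sigma(\omega)$, so $B_1=A_1$ exactly; and the test function $f=U^{1-p'}\chi_{B(0,R)}$ with $p(1-p')+1=1-p'$ yields $A_1\le C_1$. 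Two small points deserve explicit mention. First, the upper bound in part (3) is only as good as the constant in the one-dimensional theorem you invoke; the form $(p')^{1/p'}p^{1/q}B_1$ for the full range $1<p\le q<\infty$ (not just Muckenhoupt's case $p=q$) is available in the literature but should be cited precisely, since several inequivalent constants circulate. Second, when $\int_{\mathfrak{S}}U(s\omega)^{1-p'}d\sigma(\omega)=+\infty$ on a set of $s$ of positive measure your weight $u$ vanishes there and the one-dimensional theorem (usually stated for positive weights) does not apply literally; this is harmless because then $A_1=\infty$ (as $W>0$ forces $\int_{\mathbb{G}\setminus B(0,R)}W>0$) and sufficiency is vacuous, but the case split should be stated. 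With those two remarks added, the argument is complete.
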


Now we formulate the Stein-Weiss inequality on $\mathbb{G}$.
\begin{thm}\label{stein-weiss3}
Let $\mathbb{G}$ be a homogeneous group of homogeneous dimension $Q$ and let $|\cdot|$ be an arbitrary homogeneous quasi-norm on $\mathbb{G}$.
Let $0<\lambda<Q$, $1<p<\infty$, $\alpha<\frac{Q}{p'}$, $\beta<\frac{Q}{q}$, $\alpha+\beta\geq0$, $\frac{1}{q}=\frac{1}{p}+\frac{\alpha+\beta+\lambda}{Q}-1$, where $\frac{1}{p}+\frac{1}{p'}=1$ and $\frac{1}{q}+\frac{1}{q'}=1$. Then for $1<p\leq q<\infty$, we have
\begin{equation}\label{stein-weiss}
\||x|^{-\beta}I_{\lambda,|\cdot|}u\|_{L^{q}(\mathbb{G})}\leq C \||x|^{\alpha}u\|_{L^{p}(\mathbb{G})}.
\end{equation}
where $C$ is positive constant  and independent by $u$.
\end{thm}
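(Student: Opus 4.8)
The plan is to derive \eqref{stein-weiss} from the $L^p(\mathbb{G})\to L^q(\mathbb{G})$ boundedness of the positive operator $\widetilde{T}v(x):=|x|^{-\beta}\int_{\mathbb{G}}|y|^{-\alpha}\,|y^{-1}x|^{-\lambda}\,v(y)\,dy$: applying $\|\widetilde Tv\|_{L^q}\le C\|v\|_{L^p}$ with $v=|\cdot|^{\alpha}|u|$ gives $\||x|^{-\beta}I_{\lambda,|\cdot|}u\|_{L^q}\le\|\widetilde T(|\cdot|^{\alpha}|u|)\|_{L^q}\le C\||x|^{\alpha}u\|_{L^p}$. Since $\widetilde T$ is positivity-preserving, all integrals below may be read as (possibly infinite) integrals of nonnegative functions, and it suffices to treat $v\ge 0$. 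Finally, by Proposition \ref{prop_quasi_norm} any two homogeneous quasi-norms on $\mathbb{G}$ are equivalent, and passing to an equivalent one only multiplies each of $|x|^{-\beta}$, $|y|^{-\alpha}$, $|y^{-1}x|^{-\lambda}$ by bounded quantities, so I may and do assume $|\cdot|$ is a \emph{norm}; in particular the reverse triangle inequality $|y^{-1}x|\ge\bigl||x|-|y|\bigr|$ holds for all $x,y\in\mathbb{G}$.

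Next I split the $y$-integration into the three regions $\mathrm I=\{|y|\le\tfrac12|x|\}$, $\mathrm{II}=\{|y|\ge 2|x|\}$, $\mathrm{III}=\{\tfrac12|x|<|y|<2|x|\}$, so $\widetilde T=\widetilde T_{\mathrm I}+\widetilde T_{\mathrm{II}}+\widetilde T_{\mathrm{III}}$, and bound each piece. On $\mathrm I$ the reverse triangle inequality gives $|y^{-1}x|\ge\tfrac12|x|$, hence $\widetilde T_{\mathrm I}v(x)\le 2^{\lambda}|x|^{-(\beta+\lambda)}\int_{B(0,|x|)}|y|^{-\alpha}v(y)\,dy$, so that $\|\widetilde T_{\mathrm I}v\|_{L^q}$ is bounded (up to $2^\lambda$) by the left-hand side of the integral Hardy inequality \eqref{5.2} with $f(z)=|z|^{-\alpha}v(z)$, $W(x)=|x|^{-(\beta+\lambda)q}$ and $U(z)=|z|^{\alpha p}$ (chosen so that $\int_{\mathbb G}f^{p}U=\|v\|_{L^p}^{p}$). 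Symmetrically, on $\mathrm{II}$ one has $|y^{-1}x|\ge\tfrac12|y|$, so $\widetilde T_{\mathrm{II}}v(x)\le 2^{\lambda}|x|^{-\beta}\int_{\mathbb G\setminus B(0,|x|)}|y|^{-(\alpha+\lambda)}v(y)\,dy$, the left-hand side of \eqref{5.4} with $f(z)=|z|^{-(\alpha+\lambda)}v(z)$, $W(x)=|x|^{-\beta q}$, $U(z)=|z|^{(\alpha+\lambda)p}$. Thus Theorem \ref{integral_hardy} controls $\widetilde T_{\mathrm I}$ and $\widetilde T_{\mathrm{II}}$ once $A_1$ and $A_2$ are seen to be finite.

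Checking $A_1,A_2<\infty$ is a short computation with the polar-coordinate formula \eqref{EQ:polar} and the scaling identity $\alpha+\beta+\lambda=\tfrac Qq+\tfrac{Q}{p'}$ (which is the hypothesis $\tfrac1q=\tfrac1p+\tfrac{\alpha+\beta+\lambda}{Q}-1$ rewritten). For $A_1$, $\int_{\mathbb G\setminus B(0,R)}W$ equals a constant times $R^{Q-(\beta+\lambda)q}$ and is finite because $\beta+\lambda>\tfrac Qq$ (which follows from $\alpha<\tfrac Q{p'}$ via the scaling identity), while $\int_{B(0,R)}U^{1-p'}=\int_{B(0,R)}|x|^{-\alpha p'}dx$ equals a constant times $R^{Q-\alpha p'}$ and is finite because $\alpha<\tfrac Q{p'}$; multiplying and taking the relevant roots, the exponent of $R$ is $(\tfrac Qq-\beta-\lambda)+(\tfrac Q{p'}-\alpha)=0$, so $A_1<\infty$. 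The computation for $A_2$ is the mirror image, the two integrals now being finite thanks to $\beta<\tfrac Qq$ (which also yields $\alpha+\lambda>\tfrac Q{p'}$), and again the exponent of $R$ vanishes. Hence $\|\widetilde T_{\mathrm I}v\|_{L^q}+\|\widetilde T_{\mathrm{II}}v\|_{L^q}\le C\|v\|_{L^p}$.

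The region $\mathrm{III}$ is, I expect, the genuine obstacle: there $|y^{-1}x|$ can be arbitrarily small, so no crude kernel bound is available and the Hardy inequality is useless. The key point is that on $\mathrm{III}$ one has $|y|\asymp|x|$, so $|x|^{-\beta}|y|^{-\alpha}\le C|x|^{-(\alpha+\beta)}$, and — since $|y^{-1}x|\le|x|+|y|\le 3|x|$ and $\alpha+\beta\ge 0$ (this is precisely where that hypothesis is used) — also $|x|^{-(\alpha+\beta)}\le C|y^{-1}x|^{-(\alpha+\beta)}$; hence $\widetilde T_{\mathrm{III}}v(x)\le C\int_{\mathbb G}|y^{-1}x|^{-(\lambda+\alpha+\beta)}v(y)\,dy=C\,I_{\lambda+\alpha+\beta,|\cdot|}v(x)$. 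If $p<q$ then $0<\lambda+\alpha+\beta<Q$ and $\tfrac1q=\tfrac1p+\tfrac{\lambda+\alpha+\beta}{Q}-1$, so Theorem \ref{Trsob} (with its $\lambda$ replaced by $\lambda+\alpha+\beta$) gives $\|\widetilde T_{\mathrm{III}}v\|_{L^q}\le C\|v\|_{L^p}$. If $p=q$, the scaling identity forces $\alpha+\beta=Q-\lambda>0$ and the last bound is vacuous, so I instead decompose $\mathbb G\setminus\{0\}=\bigsqcup_{k\in\mathbb Z}R_k$ into dyadic shells $R_k=\{2^k\le|x|<2^{k+1}\}$: for $x\in R_k$, region $\mathrm{III}$ forces $y\in R_{k-1}\cup R_k\cup R_{k+1}$ and $|y^{-1}x|\le 2^{k+3}$, on which set the weights contribute a factor $\asymp 2^{-k(\alpha+\beta)}$ while Young's inequality with the truncated kernel $|z|^{-\lambda}\mathbf 1_{\{|z|\le 2^{k+3}\}}\in L^1(\mathbb G)$ (of $L^1$-norm $\asymp 2^{k(Q-\lambda)}=2^{k(\alpha+\beta)}$) bounds the $k$-th shell's contribution by $C\|v\,\mathbf 1_{R_{k-1}\cup R_k\cup R_{k+1}}\|_{L^p}$; summing in $k$, using $q=p$ and the bounded overlap of the shells, yields $\|\widetilde T_{\mathrm{III}}v\|_{L^q}\le C\|v\|_{L^p}$. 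Adding the three contributions completes the proof.
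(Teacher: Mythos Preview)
Your proof is correct and follows essentially the same approach as the paper: reduction to a norm via Proposition~\ref{prop_quasi_norm}, the same three-region split of the $y$-integration, the integral Hardy inequalities of Theorem~\ref{integral_hardy} with the same weights $W,U$ for the off-diagonal regions (your $\mathrm{I},\mathrm{II}$ are the paper's $I_1,I_3$), and for the diagonal region the same use of $\alpha+\beta\ge0$ to absorb the weights into the kernel and then Theorem~\ref{Trsob} when $p<q$, or the same dyadic-shell plus Young's-inequality argument when $p=q$. The only cosmetic difference is that you first substitute $v=|\cdot|^{\alpha}|u|$ and work with the operator $\widetilde T$, whereas the paper keeps $u$ throughout; this changes nothing in substance.
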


	In inequality \eqref{stein-weiss} with $\alpha=0$ we get the weighted Hardy-Littlewood-Sobolev inequality established in  \cite[Theorem 4.1]{RY}. Thus, by setting $\alpha=\beta=0$ we get Hardy-Littlewood-Sobolev inequality on the homogeneous Lie groups. In the Abelian (Euclidean) case ${\mathbb G}=(\mathbb R^{N},+)$, we have $Q=N$ and $|\cdot|$ can be any homogeneous quasi-norm  on $\mathbb R^{N}$, so with the usual Euclidean distance, i.e. $|\cdot|=\|\cdot\|_{E}$, Theorem \ref{stein-weiss3} gives the classical result of Stein and Weiss (Theorem \ref{Classiacal_Stein-Weiss_inequality}).

\begin{proof}[Proof of Theorem \ref{stein-weiss3}]
Define
\begin{equation}
\||x|^{-\beta}I_{\lambda,|\cdot|}u\|^{q}_{L^{q}(\mathbb{G})}=\int_{\mathbb{G}}\left(\int_{\mathbb{G}}\frac{u(y)}{|x|^{\beta}|y^{-1} x|^{\lambda}}dy\right)^{q}dx=I_{1}+I_{2}+I_{3},
\end{equation}
where
\begin{equation}
I_{1}=\int_{\mathbb{G}}\left(\int_{B\left(0,\frac{|x|}{2}\right)}\frac{u(y)}{|x|^{\beta}|y^{-1} x|^{\lambda}}dy\right)^{q}dx,
\end{equation}
\begin{equation}
I_{2}=\int_{\mathbb{G}}\left(\int_{B(0,2|x|)\setminus B\left(0,\frac{|x|}{2}\right)}\frac{u(y)}{|x|^{\beta}|y^{-1} x|^{\lambda}}dy\right)^{q}dx,
\end{equation}
and
\begin{equation}
I_{3}=\int_{\mathbb{G}}\left(\int_{\mathbb{G}\setminus B(0,2|x|)}\frac{u(y)}{|x|^{\beta}|y^{-1} x|^{\lambda}dy}\right)^{q}dx.
\end{equation}

From now on, in view of Proposition \ref{prop_quasi_norm} we can assume that our quasi-norm is actually a norm.

\textbf{Step 1.} Let us consider $I_{1}$.   By using Proposition \ref{prop_quasi_norm} and the properties of the quasi-norm with  $|y|\leq\frac{|x|}{2}$, we get
$$|x|=|x^{-1}|=|x^{-1}y y^{-1}|$$ $$\leq |x^{-1} y|+|y^{-1}|=|y^{-1} x|+|y|$$
$$\leq |y^{-1} x|+\frac{|x|}{2}.$$
Then for any $\lambda>0$, we have
$$2^{\lambda}|x|^{-\lambda}\geq |y^{-1} x|^{-\lambda}.$$
Therefore, we get
\begin{multline}
I_{1}=\int_{\mathbb{G}}\left(\int_{B\left(0,\frac{|x|}{2}\right)}\frac{u(y)}{|x|^{\beta}|y^{-1} x|^{\lambda}}dy\right)^{q}dx\leq 2^{\lambda}\int_{\mathbb{G}}\left(\int_{B\left(0,\frac{|x|}{2}\right)}\frac{u(y)}{|x|^{\beta+\lambda}}dy\right)^{q}dx\\
=2^{\lambda}\int_{\mathbb{G}}\left(\int_{B\left(0,\frac{|x|}{2}\right)}u(y)dy\right)^{q}|x|^{-(\beta+\lambda)q}dx.
\end{multline}
If condition \eqref{5.2.1} in Theorem \ref{integral_hardy} with $W(x)=|x|^{-(\beta+\lambda)q}$ and $U(y)=|y|^{\alpha p}$ in \eqref{5.2} is satisfied, then we have
\begin{equation}
I_{1}\leq2^{\lambda}\int_{\mathbb{G}}\left(\int_{B(0,\frac{|x|}{2})}u(y)dy\right)^{q}|x|^{-(\beta+\lambda)q}dx
\leq C_{1}\||x|^{\alpha}u\|^{q}_{L^{p}(\mathbb{G})}.
\end{equation}
Let us verify condition \eqref{5.2.1}. So from the assumption we have $\alpha<\frac{Q}{p'}$, then $$\frac{1}{q}=\frac{1}{p}+\frac{\alpha+\beta+\lambda}{Q}-1<\frac{1}{p}+\frac{\frac{Q}{p'}+\beta+\lambda}{Q}-1=\frac{1}{p}+\frac{1}{p'}+\frac{\beta+\lambda}{Q}-1=\frac{\beta+\lambda}{Q},$$
that is, $Q-(\beta+\lambda)q<0$ and by the using polar decomposition \eqref{EQ:polar}:
\begin{multline}
\left(\int_{\mathbb{G}\setminus B(0,|x|)}W(x)dx\right)^{\frac{1}{q}}=\left(\int_{\mathbb{G}\setminus B(0,|x|)}|x|^{-(\beta+\lambda)q}dx\right)^{\frac{1}{q}}\\
=\left(\int_{R}^{\infty}\int_{\mathfrak{S}}r^{Q-1}r^{-(\beta+\lambda)q}drd\sigma(y)\right)^{\frac{1}{q}}
=\left(|\mathfrak{S}|\int_{R}^{\infty}r^{Q-1-(\beta+\lambda)q}dr\right)^{\frac{1}{q}}\leq C R^{\frac{Q-(\beta+\lambda)q}{q}}.
\end{multline}
Since $\alpha<\frac{Q}{p'}$, we have
$$\alpha p(1-p')+Q>\alpha p(1-p')+\alpha p'=\alpha p+\alpha p'(1-p)=\alpha p -\alpha p=0 .$$
So, $\alpha p(1-p')+Q>0$. Then, let us consider
\begin{multline}
\left(\int_{ B(0,|x|)}U^{1-p'}(x)dx\right)^{\frac{1}{p'}}=\left(\int_{ B(0,|x|)}|x|^{(1-p')\alpha p}dx\right)^{\frac{1}{p'}}\\=\left(\int^{ R}_{0}\int_{\mathfrak{S}}r^{(1-p')\alpha p}r^{Q-1}drd\sigma(y)\right)^{\frac{1}{p'}}
\leq C\left(|\mathfrak{S}|\int^{ R}_{0}r^{(1-p')\alpha p+Q-1}dr\right)^{\frac{1}{p'}}\\ \leq C R^{\frac{(1-p')\alpha p+Q}{p'}}=CR^{\frac{Q-\alpha p'}{p'}}.
\end{multline}
Moreover, the assumptions imply
$$A_{1}=\sup_{R>0}\left(\int_{\mathbb{G}\setminus B(0,|x|)}W(x)dx\right)^{\frac{1}{q}}\left(\int_{ B(0,|x|)}U^{1-p'}(x)dx\right)^{\frac{1}{p'}}\leq C R^{\frac{Q-(\beta+\lambda)q}{q}+\frac{Q-\alpha p'}{p'}}$$
$$=C R^{Q(\frac{1}{q}-\frac{1}{p}-\frac{\alpha+\beta+\lambda}{Q}+1)}=C<\infty,$$
where $C=C(\alpha,\beta,p,\lambda)$ is a positive constant.
Then by using \eqref{5.2}, we obtain
\begin{equation}
I_{1}\leq C\int_{\mathbb{G}}\left(\int_{B\left(0,\frac{|x|}{2}\right)}u(y)dy\right)^{q}|x|^{-(\beta+\lambda)q}dx
\leq C_{1}\||x|^{\alpha}u\|^{q}_{L^{p}(\mathbb{G})}.
\end{equation}

\textbf{Step 2.} As in the previous case $I_{1}$, now we consider $I_{3}$. From $2|x|\leq |y|$,  we calculate
$$|y|=|y^{-1}|=|y^{-1} x x^{-1}|\leq |y^{-1} x|+|x|$$
$$\leq |y^{-1} x|+\frac{|y|}{2},$$
that is,
$$\frac{|y|}{2}\leq |y^{-1} x|.$$
Then, if condition \eqref{5.4.1} with $W(x)=|x|^{-\beta q}$ and $U(y)=|y|^{(\alpha+\lambda)p}$ is satisfied, then we have
\begin{multline}
I_{3}=\int_{\mathbb{G}}\left(\int_{\mathbb{G}\setminus B(0,2|x|)}\frac{u(y)}{|x|^{\beta}|y^{-1}x|^{\lambda}}dy\right)^{q}dx\leq C\int_{\mathbb{G}}\left(\int_{\mathbb{G}\setminus B(0,2|x|)}\frac{u(y)}{|x|^{\beta}|y|^{\lambda}}dy\right)^{q}dx\\
=C\int_{\mathbb{G}}\left(\int_{\mathbb{G}\setminus B(0,2|x|)}u(y)|y|^{-\lambda}dy\right)^{q}|x|^{-\beta q}dx\leq C\||x|^{\alpha}u\|^{q}_{L^{p}(\mathbb{G})}.
\end{multline}
Now let us check condition \eqref{5.4.1}. We have
\begin{multline}
\left(\int_{ B(0,|x|)}W(x)dx\right)^{\frac{1}{q}}=\left(\int_{ B(0,|x|)}|x|^{-\beta q}dx\right)^{\frac{1}{q}}\\=\left(\int_{0}^{R}\int_{\mathfrak{S}}r^{-\beta q}r^{Q-1}drd\sigma(y)\right)^{\frac{1}{q}}
\leq C R^{\frac{Q-\beta q}{q}},
\end{multline}
where $Q-\beta q>0$,
and
\begin{multline}
\left(\int_{\mathbb{G}\setminus B(0,|x|)}U^{1-p'}(x)dx\right)^{\frac{1}{p'}}=\left(\int_{\mathbb{G}\setminus B(0,|x|)}|x|^{(\alpha+\lambda)(1-p')p}dx\right)^{\frac{1}{p'}}\\=\left(\int_{R}^{\infty}\int_{\mathfrak{S}}r^{Q-1}r^{(\alpha+\lambda)(1-p')p}drd\sigma(y)\right)^{\frac{1}{p'}}
\leq C R^{\frac{Q-p'(\alpha+\lambda)}{p'}},
\end{multline}
where from  $\beta<\frac{Q}{q}$, we obtain $Q-p'(\alpha+\lambda)<0$.

Combining these facts we have
\begin{multline}
A_{2}:=\sup_{R>0}\left(\int_{ B(0,|x|)}W(x)dx\right)^{\frac{1}{\theta}}\left(\int_{\mathbb{G}\setminus B(0,|x|)}U^{1-p'}(x)dx\right)^{\frac{1}{p'}}\leq C R^{\frac{Q-p'(\alpha+\lambda)}{p'}+\frac{Q-\beta q}{q}}\\
=C R^{\frac{Q}{p'}-(\alpha+\beta+\lambda)+\frac{Q}{q}}=C R^{Q(\frac{1}{p'}-\frac{\alpha+\beta+\lambda}{Q}+\frac{1}{q})}=C<\infty,
\end{multline}
where $C=C(\alpha,\beta,p,\lambda)$ is a positive constant. Then we establish
\begin{equation}
I_{3}=\int_{\mathbb{G}}\left(\int_{\mathbb{G}\setminus B(0,2|x|)}\frac{u(y)}{|x|^{\beta}|y^{-1} x|^{\lambda}}dy\right)^{q}dx\leq C\||x|^{\alpha}u\|^{q}_{L^{p}(\mathbb{G})}.
\end{equation}

\textbf{Step 3.} Let us estimate $I_{2}$ now.

\textbf{Case 1:} $p<q$. From $\frac{|x|}{2}<|y|<2|x|$, we obtain
$$\frac{|y^{-1} x|}{2}\leq \frac{|x|+|y|}{2}= \frac{|x|}{2}+\frac{|y|}{2}<\frac{3}{2}|y|,$$
that is, $$|y^{-1} x|<3|y|.$$
For all $\alpha+\beta\geq0$, we have
$$|y^{-1} x|^{\alpha+\beta}< 3^{\alpha+\beta}|y|^{\alpha+\beta}= 3^{\alpha+\beta}|y|^{\alpha}|y|^{\beta}\leq 3^{\alpha+\beta}2^{|\beta|}|x|^{\beta}|y|^{\alpha}.$$
Therefore,
\begin{multline*}
I_{2}=\int_{\mathbb{G}}\left(\int_{B(0,2|x|)\setminus B\left(0,\frac{|x|}{2}\right)}\frac{u(y)}{|x|^{\beta}|y^{-1} x|^{\lambda}}dy\right)^{q}dx\\ \leq C\int_{\mathbb{G}}\left(\int_{B(0,2|x|)\setminus B\left(0,\frac{|x|}{2}\right)}\frac{|y|^{\alpha}u(y)}{|y^{-1} x|^{\alpha+\beta+\lambda}}dy\right)^{q}dx
\\ \leq C \int_{\mathbb{G}}\left(\int_{\mathbb{G}}\frac{|y|^{\alpha}u(y)}{|y^{-1}x|^{\alpha+\beta+\lambda}}dy\right)^{q}dx
=C\|I_{\lambda+\alpha+\beta,|\cdot|}\tilde{u}\|^{q}_{L^{q}(\mathbb{G})},
\end{multline*}
where $\tilde{u}(x)=|x|^{\alpha}u(x)$.

By assumption $\frac{1}{q}-\frac{1}{p}=\frac{\lambda+\alpha+\beta}{Q}-1<0$, then $Q>\lambda+\alpha+\beta$ and by using Theorem \ref{Trsob} with $p<q$,  we establish
\begin{equation}
I_{2}\leq C\|I_{\lambda+\alpha+\beta,|\cdot|}\tilde{u}\|^{q}_{L^{q}(\mathbb{G})}\leq C\|\tilde{u}\|_{L^{p}(\mathbb{G})}^{q}=C\||x|^{\alpha}u\|_{L^{p}(\mathbb{G})}^{q}.
\end{equation}
\textbf{Case 2:} $p=q$. We decompose $I_{2}$ as
\begin{equation}
I_{2}=\sum_{k\in \mathbb{Z}}\int_{2^{k}\leq |x| \leq 2^{k+1}}\left(\int_{B(0,2|x|)\setminus B\left(0,\frac{|x|}{2}\right)}\frac{u(y)}{|x|^{\beta}|y^{-1} x|^{\lambda}}dy\right)^{p}dx.
\end{equation}

From $|x| \leq 2|y| \leq 4 |x|$ and $2^{k} \leq |x| \leq 2^{k+1}$, we have $2^{k-1} \leq |y| \leq 2^{k+2}$ and  $0 \leq |y^{-1} x| \leq 3|x| \leq 3 \cdot 2^{k+1}$.

 By using  Young's inequality with $\frac{1}{p}+\frac{1}{r}=1+\frac{1}{q}$ (our case $p=q$, hence $r=1$), we calculate
\begin{align*}
I_{2}&=\sum_{k\in \mathbb{Z}}\int_{2^{k}\leq |x| \leq 2^{k+1}}\left(\int_{B(0,2|x|)\setminus B\left(0,\frac{|x|}{2}\right)}\frac{u(y)}{|x|^{\beta}|y^{-1} x|^{\lambda}}dy\right)^{p}dx\\&
=\sum_{k\in \mathbb{Z}}\int_{2^{k}\leq |x| \leq 2^{k+1}}\left(\int_{B(0,2|x|)\setminus B\left(0,\frac{|x|}{2}\right)}\frac{u(y)}{|y^{-1} x|^{\lambda}}dy\right)^{p}\frac{dx}{|x|^{\beta p}}\\&
\leq \sum_{k\in \mathbb{Z}} 2^{-\beta p k}\|u\cdot\chi_{\{2^{k-1}\leq |y|\leq 2^{k+2}\}}*|x|^{-\lambda}\|^{p}_{L^{p}(\mathbb{G})}\\&
\leq \sum_{k\in \mathbb{Z}} 2^{-\beta p k} \||x|^{-\lambda}\cdot\chi_{\{0\leq |y|\leq 3\cdot2^{k+1}\}}\|^{p}_{L^{1}(\mathbb{G})}\|u\cdot\chi_{\{2^{k-1}\leq |y|\leq 2^{k+2}\}}\|^{p}_{L^{p}(\mathbb{G})}\\&
\leq C \sum_{k\in \mathbb{Z}} 2^{(Q-\lambda-\beta)kp}\|u\cdot\chi_{\{2^{k-1}\leq |y|\leq 2^{k+2}\}}\|^{p}_{L^{p}(\mathbb{G})} =C \sum_{k\in \mathbb{Z}} 2^{\alpha kp}\|u\cdot\chi_{\{2^{k-1}\leq |y|\leq 2^{k+2}\}}\|^{p}_{L^{p}(\mathbb{G})}\\&
=C \sum_{k\in \mathbb{Z}} \|2^{\alpha (k-1)}u\cdot\chi_{\{2^{k-1}  \leq |y|\leq 2^{k+2}\}}\|^{p}_{L^{p}(\mathbb{G})} \leq
 C\sum_{k\in \mathbb{Z}}\||y|^{\alpha}u\cdot\chi_{\{2^{k-1}\leq |y|\leq 2^{k+2}\}}\|^{p}_{L^{p}(\mathbb{G})}\\&
 = C\||x|^{\alpha}u\|^{p}_{L^{p}(\mathbb{G})}.
\end{align*}
Theorem \ref{stein-weiss3} is proved.
\end{proof}
\begin{rem}
With assumptions Theorem \ref{stein-weiss3} and $h\in L^{q'}(\mathbb{G})$, we have the following Stein-Weiss inequality
\begin{equation}
\left|\int_{\mathbb{G}}\frac{u(y)h(x)}{|x|^{\beta}|y^{-1}x|^{\lambda}|y|^{\alpha}}dxdy\right|\leq C\|u\|_{L^{p}(\mathbb{G})}\|h\|_{L^{q'}(\mathbb{G})},
\end{equation}
where $C$ is a positive constant independent of $u$ and $h$. This gives \eqref{EQ:HLSi2}.
\end{rem}

\end{document}